\newtheorem{lemma}{Lemma}
\newtheorem{prop}{Proposition}
\newtheorem{cor}{Corollary}
\newtheorem*{main}{Main Theorem}
\numberwithin{equation}{section}
\providecommand{\tightlist}{%
  \setlength{\itemsep}{0pt}\setlength{\parskip}{0pt}}
\let\oldparagraph\paragraph
\renewcommand{\paragraph}[1]{\oldparagraph{#1}\mbox{}}
\let\oldsubparagraph\subparagraph
\renewcommand{\subparagraph}[1]{\oldsubparagraph{#1}\mbox{}}
\def\fps@figure{htbp}
\title[Expansion, divisibility and parity: an explanation]{Expansion, divisibility and parity:\\ an explanation}
\author{Harald Andr\'es Helfgott}
\date{}
\address{Harald A. Helfgott, IMJ-PRG, UMR 7586,
 58 avenue de France, B\^{a}timent S. Germain, case 7012,
 75013 Paris CEDEX 13, France;
 Mathematisches Institut,
 Georg-August Universit\"{a}t G\"{o}ttingen, Bunsenstra{\ss}e 3-5,
 D-37073 G\"{o}ttingen, Deutschland}
\email{harald.helfgott@gmail.com}
\begin{document}
\begin{abstract}
After seeing how questions on the finer distribution of prime
factorization -- considered inaccessible until recently -- reduce to bounding
the norm of an operator defined on a graph describing factorization, we will
show how to bound that norm. In essence, the graph is a strong local expander,
with all eigenvalues bounded by a constant factor times the theoretical minimum
(i.e., the eigenvalue bound corresponding to Ramanujan graphs).
The proof will take us on a walk from graph
theory to linear algebra and the geometry of numbers, and back to graph theory,
aided, along the way, by a generalized sieve. This is an expository paper;
the full proof has appeared as a joint preprint with M. Radziwiłł.
\end{abstract}

\maketitle

\section{Introduction}
This paper is meant as an informal exposition of \cite{HelfRadz}. The main result is a statement on how a linear
operator defined in terms of divisibility by primes has small norm. In
this exposition, we will choose to start from one of its main current
applications, namely,

\begin{equation}\label{eq:firstdagger}
\frac{1}{\log x} \sum_{n\leq x} \frac{\lambda(n) \lambda(n+1)}{n} = O\left(
\frac{1}{\sqrt{\log \log x}}\right),
\end{equation} which strengthens results by Tao \cite{MR3569059} and
Tao-Teräväinen \cite{zbMATH07141311}. (Here
\(\lambda(n)\) is the Liouville function, viz., the completely
multiplicative function such that \(\lambda(p)=-1\) for every prime
\(p\).) There are other corollaries, some of them subsuming the above
statement. It is also true that above statement is an improvement on a
bound, whereas the main result is a result that is new also in a
qualitative sense. One may thus ask oneself whether it is right to
center the exposition on \eqref{eq:firstdagger}.

All the same, \eqref{eq:firstdagger} is a concrete statement that is obviously
interesting, being a step towards Chowla's conjecture (``logarithmic
Chowla in degree 2''), and so it is a convenient initial goal.

\begin{center}\rule{0.5\linewidth}{0.5000000000pt}\end{center}

First, some meta comments. We may contrast two possible ways of writing
a paper --- what may be called the \emph{incremental} and the
\emph{retrospective} approaches.

\begin{itemize}
\tightlist
\item
  In the \emph{incremental} approach, we write a paper while we solve a
  problem, letting complications and detours accrete. There is much that
  can be said against this approach: it is hard to distinguish it from
  simply lazy writing; the end product may be unclear; just how one got
  to the solution of the problem may be inessential or even misleading.
\item
  The \emph{retrospective} approach consists in writing the paper once
  the proof is done, from the perspective that one has reached by the
  time one has solved the problem.
\end{itemize}

These are of course two extremes. Few people write nothing down while
solving a problem, and the way one followed to reach the solution
generally has some influence on the finished paper. In the case of my
paper with Maksym, what we followed was mainly the retrospective
approach, with some incremental elements, mainly to deal with technical
complications that we had to deal with after we had an outline of a
proof. It is tempting to say that that is still too much incrementality,
but, in fact, some of the feedback we have received suggests a drawback
of the retrospective approach that I had not thought of before.

When faced with a result with a lengthy proof, readers tend to come up
with their own ``natural strategy''. So far, so good: active reading is
surely a good thing. What then happens, though, is that readers may see
necessary divergences from their ``natural strategy'' as technical
complications. They may often be correct; however, they may miss why the
``natural strategy'' may not work, or how it leads to the main,
essential difficulty --- the heart of the problem, which they may then
miss for following the complications.

What I will do in this write-up is follow, not an incremental approach,
but rather an idealized view of what the path towards the solution was
or could have been like; a recreated incrementality with the benefit of
hindsight, then, starting from a ``natural strategy'', with an emphasis
on what turns out to be essential.

{\em Notation.} We will use notation that is usual within
analytic number theory. In particular, given two functions $f,g$ on
$\mathbb{R}^+$ or $\mathbb{Z}^+$,
     $f(x) = O(g(x))$ means that there exists a constant $C>0$
  such that $|f(x)|\leq C g(x)$ for all large enough $x$, and
  $f(x) = o(g(x))$ means that $\lim_{x\to \infty} f(x)/g(x) = 0$ (and $g(x)>0$ for $x$ large enough).
By \(O^*(B)\), we will mean ``a quantity whose absolute value is no
larger than \(B\)''; it is a useful bit of notation for error terms.
We define \(\omega(n)\) to be the number of prime divisors of an
integer \(n\).

\subsection{Initial setup}

Let us set out to reprove Tao's ``logarithmic Chowla'' statement, that
is,

\[
\frac{1}{\log x} \sum_{n\leq x} \frac{\lambda(n) \lambda(n+1)}{n} \to 0
\]

as \(x\to \infty\). Now, Tao's method gives a bound of
\(O(1/(\log \log \log \log x)^{\alpha})\) on the left side (as explained
in \cite{HelfUbis}, with \(\alpha=1/5\)),
while Tao-Teräväinen should yield a bound of
\(O(1/(\log \log \log x)^{\alpha})\) for some \(\alpha>0\). Their work
is based on depleting entropy, or, more precisely, on depleting mutual
information. Our method gives stronger bounds (namely,
\(O(1/\sqrt{\log \log x})\)) and is also ``stronger'' in ways that will
later become apparent. Let us focus, however, simply on giving a
different proof, and welcome whatever might come from it.

The first step will be consist of a little manipulation as in Tao, based
on the fact that \(\lambda\) is multiplicative. Let
\(W = \sum_{n\leq x} \lambda(n) \lambda(n+1)/n\). For any prime (or
integer!) \(p\),

\[
\begin{aligned}
\frac{1}{p} W &= \sum_{n\leq x} \frac{\lambda(p n) \lambda(p n + p)}{p n}\\
&= \sum_{n\leq p x:\; p|n} \frac{\lambda(n) \lambda(n+p)}{n} = 
\sum_{n\leq x:\; p|n} \frac{\lambda(n) \lambda(n+p)}{n}
+ O\left(\frac{\log p}{p}\right).\end{aligned}
\]

Hence, for any set of primes \(\mathbf{P}\),

\[
\sum_{p\in \mathbf{P}} \sum_{n\leq x:\; p|n} \frac{\lambda(n) \lambda(n+p)}{n} = W \mathscr{L} + O\left(\sum_{p\in \mathbf{P}} 
\frac{\log p}{p}\right),
\]

where \(\mathscr{L} = \sum_{p\in \mathbf{P}} 1/p\). If \(H\) is such
that \(p\leq H\) for all \(p\in \mathbf{P}\), then, by the prime number
theorem, \(\sum_{p\in \mathbf{P}} (\log p)/p \ll \log H\). Thus

\[
W = \frac{1}{\mathscr{L}} \sum_{n\leq x} \sum_{p\in \mathbf{P}: p|n}
\frac{\lambda(n) \lambda(n+p)}{n} + O\left(\frac{\log H}{\mathscr{L}}\right).
\]

Assuming \(H = x^{o(1)}\) (so that \(\log H = o(\log x)\)) and
\(\mathscr{L}\geq 1\), and using a little partial summation, we see that, to
prove that $W = o(\log x)$,
it is enough to show that \(S_0 = o(N\mathscr{L})\), where

\[
S_0 = \sum_{N < n\leq 2 N}\sum_{p\in \mathbf{P}: p|n} \lambda(n) \lambda(n+p).
\]

Let us make this sum a little more symmetric. Let
\(\mathbf{N} = \{n\in \mathbb{Z}: N < n\leq 2 N\}\), and define

\[
S = \sum_{n\in \mathbf{N}} \sum_{\sigma = \pm 1} \sum_{\substack{p\in \mathbf{P}: p|n\\ n+\sigma p\in \mathbf{N}}} 
 \lambda(n) \lambda(n+\sigma p).
\]

Then \(S = 2 S_0 + O(\sum_{p\in \mathbf{P}} 1) = 2 S_0 + O(H)\), and
thus it is enough to prove that

\[
S = o(N \mathscr{L}).
\]

\textbf{\emph{Objectives.}} Tao showed that there \emph{exists} a set
\(\mathbf{P}\) of primes (very small compared to \(N\)) such that
\(S = o(N \mathscr{L})\). It is our aim to prove that
\(S = o(N \mathscr{L})\) for \emph{every} set \(\mathbf{P}\) of primes
satisfying some simple conditions. (As we said, we assume \(p\leq H\),
and it is not hard to see that we have to assume
\(\mathscr{L}\to\infty\); it will also be helpful to assume that no
\(p\in \mathbf{P}\) is tiny compared to \(H\).) We will in fact be able
to show that \(S = O(N \sqrt{\mathscr{L}})\), which is essentially
optimal.

\section{A first attempt}

We now set out on our own.

\subsection{Old habits die hard. A reduction}

It is a deep-seated instinct for an analytic
number theorist to apply Cauchy-Schwarz:

\[
\begin{aligned}
S^2 &\leq \left(\sum_{n\in \mathbf{N}} \sum_{\sigma = \pm 1} \sum_{\substack{p\in \mathbf{P}: p|n\\ n+\sigma p\in \mathbf{N}}} 
 \lambda(n) \lambda(n+\sigma p)\right)^2\\
 &\leq N \sum_{n\in \mathbf{N}} \sum_{\sigma_1,\sigma_2 = \pm 1} \sum_{\substack{p_1,p_2\in \mathbf{P}\\ p_i|n,\; n+\sigma_i p_i\in \mathbf{N}}} 
  \lambda(n+\sigma_1 p_1) \lambda(n+\sigma_2 p_2)\\
  &\leq \sum_{n\in \mathbf{N}}\; \sum_{\sigma_1,\sigma_2 = \pm 1} \sum_{\substack{p_1,p_2\in \mathbf{P}\\ p_1|n, p_2|n+\sigma_1 p_1\\
  n+\sigma_1 p_1, n+\sigma_1 p_1 + \sigma_2 p_2 \in \mathbf{N}}} \lambda(n)
\lambda(n+\sigma_1 p_1+\sigma_2 p_2),\end{aligned}
\]

where we are changing variables in the last step.

We iterate, applying Cauchy-Schwarz \(\ell\) times:

\[
S^{2^\ell} \leq N^{2^\ell-1} \sum_{n\in \mathbf{N}}
\sum_{\substack{\sigma_i= \pm 1,\; p_i\in \mathbf{P}\\ \forall 1\leq i\leq 2^\ell: p_i|n+\sigma_1 p_1 + \dotsc + \sigma_{i-1} p_{i-1}}} \lambda(n) 
\lambda(n+ \sigma_1 p_1 + \dotsc + \sigma_{2^\ell} p_{2^{\ell}}).
\]

We can see \(n+\sigma_1 p_1 + \dotsc + \sigma_{2^\ell} p_{2^\ell}\) as
the outcome of a ``walk'' of length \(2^\ell\).

Suppose for a moment that, for \(k=2^\ell\) large, the number of walks
of length \(k\) from \(n\) to \(m\) is generally about \(\psi(m)\),
where \(\psi\) is a nice continuous function. Then \(S^{2^\ell}\) would
tend to

\[
N^{2^\ell-1} \sum_{n\in \mathbf{N}} \sum_{m} \lambda(n) \lambda(n+m) \psi(m).
\]

The main result (Theorem 1) in Matomäki-Radziwiłł \cite{MR3488742} would then
give us a bound on that double sum. Let us write that bound in the form

\[
\left|\sum_{n\in \mathbf{N}} \sum_{m} \lambda(n) \lambda(n+m) \psi(m)\right|
\leq \text{err}_2 \cdot N \mathscr{L}^{k},
\]

since \(|\psi|_1\) should be about \(\mathscr{L}^k\), and write our
statement on convergence to \(\psi\) in the form

\begin{equation}\label{eq:diffabs}
\sum_{n\in \mathbf{N}} \sum_m \left|\psi(m) - 
\sum_{\substack{\sigma_i= \pm 1,\; p_i\in \mathbf{P}\\ \forall 1\leq i\leq 2^\ell: p_i|n+\sigma_1 p_1 + \dotsc + \sigma_{i-1} p_{i-1}\\ \sigma_1 p_1 + \dotsc + 
\sigma_k p_k = m}} 1\right| = \text{err}_1 \cdot N \mathscr{L}^k.
\end{equation}

Then

\[
S\leq (\text{err}_1^{1/k} + \text{err}_2^{1/k})\cdot N \mathscr{L}.
\]

Here already we would seem to have a problem. The ``width'' \(M\) of the
distribution \(\psi\) (meaning its scale) should be
\(\ll \sqrt{k} \cdot \mathbb{E}(p: p\in \mathbb{P})\leq \sqrt{k} H\);
the distribution could be something like a Gaussian at that scale, say.
Now, the bound from \cite{MR3488742} is roughly
of the quality \(\text{err}_2\leq 1/\log M\). One can use intermediate
results in the same paper to obtain a bound on \(\text{err}_2\) roughly
of the form \(1/M^{\delta}\), \(\delta>0\), if we remove some integers
from \(\mathbf{N}\). At any rate, it seems clear that we would need, at the
very least, \(k\) larger than any constant times \(\log H\).

As it turns out, all of that is a non-issue, in that there is a way to avoid
taking the \(k\)th root of \(\text{err}_2\) altogether. Let us make a mental
note, however.

\subsection{Walks of different kinds}

The question now is how large \(\ell\) has to be for the number of walks
of length \(k=2^{\ell}\) from \(n\) to \(n+m\) to approach a continuous
distribution \(\psi(m)\). Consider first the walks
\(n, n+\sigma_1 p_1,\dotsc,n+\sigma_1 p_1 + \dotsb + \sigma_k p_k\) such
that no prime \(p_i\) is repeated. Fix \(\sigma_i\), \(p_i\) and let
\(n\) vary. By the Chinese Remainder Theorem, the number of
\(n\in \mathbf{N}\) such that

\[
p_1|n,\; p_2|n+\sigma_1 p_1,\; \dotsc,\; p_k|n+\sigma_1 p_1 + \dotsc + \sigma_{k-1} p_{k-1}
\]

is almost exactly \(N/p_1 p_2 \dotsb p_k\). In other words, the
probability of that walk being allowed is almost exactly
\(1/p_1 \dotsc p_k\). We may thus guess that \(\psi\) has the same shape
(scaled up by a factor of \(\mathscr{L}^k\)) as the distribution
of the endpoint of a
random walk where each edge of length \(p\) is taken with probability
\(1/p_i\) (divided by \(\mathscr{L}\), so that the probabilities add up
to \(1\)). That distribution should indeed tend to a continuous
distribution --- namely, a Gaussian --- fairly quickly. Of course, here,
we are just talking about the contribution of walks with distinct edges
\(p_i\) to

\[
\sum_{n\in \mathbf{N}} \sum_m \left(\psi(m) - 
\sum_{\substack{\sigma_i= \pm 1,\; p_i\in \mathbf{P}\\ \forall 1\leq i\leq 2^\ell: p_i|n+\sigma_1 p_1 + \dotsc + \sigma_{i-1} p_{i-1}\\ \sigma_1 p_1 + \dotsc + 
\sigma_k p_k = m}} 1\right),
\]

without absolute values, and we do need to take absolute values as in
\eqref{eq:diffabs}. However, we can get essentially what we want by
looking at the variance

\[
\sum_{n\in \mathbf{N}} \sum_m \left(\psi(m) - 
\sum_{\substack{\sigma_i= \pm 1,\; p_i\in \mathbf{P}\\ \forall 1\leq i\leq k: p_i|n+\sigma_1 p_1 + \dotsc + \sigma_{i-1} p_{i-1}\\ \sigma_1 p_1 + \dotsc + 
\sigma_k p_k = m}} 1\right)^2,
\]

and considering the contribution to this variance made by closed walks

\[
\begin{aligned}n, &n+\sigma_1 p_1, \dotsc, n+\sigma_1 p_1 + \dotsb + \sigma_k p_k = m,\\ 
&n+\sigma_1 p_1 + \dotsb + \sigma_k p_k-\sigma_{k+1} p_{k+1},\dotsc ,m-(\sigma_{k+1} p_{k+1} + \dotsc + \sigma_{2 k}
p_{2 k})=n\end{aligned}
\]

with \(p_1,p_2,\dotsc,p_{2 k}\) distinct:

\begin{center}
\begin{tikzpicture}[thick,scale=0.9, every node/.style={transform shape}]
\tikzstyle{membre}= [rectangle]
\tikzstyle{operation}=[->,>=latex]
\tikzstyle{etiquette}=[midway,fill=black!20]
\node[membre] (n) at (0,3) {$n$};
\node[membre] (n1) at (2.5,4) {$n+\sigma_1 p_1$};
\node[membre] (n2) at (6,4.5) {$n+\sigma_1 p_1 + \sigma_2 p_2$};
\node[membre] (aro) at (9.5,4) {$\dots$};
\node[membre] (m) at (12,3) {$n+\sigma_1 p_1 + \dots + \sigma_k p_k$};
\node[membre] (nk1) at (8.5,1) {$n+\sigma_1 p_1 + \dots + \sigma_k p_k - \sigma_{k+1} p_{k+1}$};
\node[membre] (nk2m1) at (2.75,1) {$\dots$};
\draw[operation] (n) to node[midway,above]{$\sigma_1 p_1$} (n1); 
\draw[operation] (n1) to node[midway,above] {$\sigma_2 p_2$} (n2);
\draw[operation] (n2) to node[midway,above] {$\sigma_3 p_3$} (aro);
\draw[operation] (aro) to node[midway,above] {$\;\;\sigma_k p_k$} (m);
\draw[operation] (m) to node[midway,below] {$\;\;\;\;\;\;\;\;\;\;\;\;\sigma_{k+1} p_{k+1}$} (nk1);
\draw[operation] (nk1) to node[midway,above] {$\sigma_{k+2} p_{k+2}$} (nk2m1);
\draw[operation] (nk2m1) to node[midway,right] {$\sigma_{2 k} p_{2 k}$} (n);
\end{tikzpicture}
\end{center}

The contribution of these closed walks is almost exactly what we would
obtain from the naïve model we were implicitly considering, viz., a random walk
where each edge \(p_i\) is taken with probability
\(1/(\mathscr{L} p_i)\), and so we should have the same limiting
distribution as in that model.

What about walks where some primes \(p_i\) do repeat? At least some of
them may make a large contribution that is not there in our naïve model.
For instance, consider walks of length \(2 k\) that retrace their steps,
so that the \((n+1)\)th step is the \(n\)th step backwards, the
\((n+2)\)th step is the \((n-1)\)th step backwards, etc.:

\[
\begin{aligned}
n, &n+\sigma_1 p_1, \dotsc, n+\sigma_1 p_1 + \dotsb + \sigma_k p_k,\\ 
&n+\sigma_1 p_1 + \dotsb + \sigma_{k-1} p_{k-1},\dotsc ,n+\sigma_1 p_1, n,\end{aligned}
\]

with

\[
p_1|n,\; p_2|n+\sigma_1 p_1,\; \dotsc,\; p_k|n+\sigma_1 p_1 + \dotsc + \sigma_{k-1} p_{k-1},
\]

\[
p_k|n+\sigma_1 p_1 + \dotsc + \sigma_{k-1} p_{k-1} + \sigma_k p_k,\;
\dotsc,\; p_2|n+\sigma_1 p_1 + \sigma_2 p_2,\; p_1|n+\sigma_1 p_1.
\]

The second row of divisibility conditions here is obviously implied by
the first row. Hence, again by the Chinese Remainder Theorem, the walk
is valid for almost exactly \(N/p_1 p_2 \dotsb p_k\) elements
\(n\in \mathbf{N}\), rather than for \(N/(p_1 p_2 \dotsb p_k)^2\)
elements. The contribution of such walks to

\[
\sum_{n\in \mathbf{N}}
\sum_{\substack{\forall 1\leq i\leq 2 k: \sigma_i= \pm 1,\; p_i\in \mathbf{P}\\ \forall 1\leq i\leq 2 k: p_i|n+\sigma_1 p_1 + \dotsc + \sigma_{i-1} p_{i-1}\\ \sigma_1 p_1 + \dotsc + 
\sigma_{2 k} p_{2 k} = 0}} 1
\]

(which is the interesting part of the variance we wrote down before) is
clearly \(N \mathscr{L}^k\). In order for it not to be of greater order
than what one expects from the limiting distribution, we should have
\(N \mathscr{L}^k \ll N \mathscr{L}^{2 k}/M\), where \(M\), the width of
the distribution, is, as we saw before, very roughly \(\sqrt{k} H\).
Thus, we need \(k\gg (\log H)/(\log \mathscr{L})\).

There are of course other walks that make similar contributions; take,
for instance,

\[
n, n+p_1, n, n-p_3, n-p_3 + p_4, n-p_3, n-p_3+p_6, n-p_3, n
\]

for \(k=3\). These are what we may call \emph{trivial walks}, in the
sense that a word is \emph{trivial} when it reduces to the identity. It
is tempting to say that their number is \(2^k C_k\), where
\(C_k\leq 2^{2 k}\) is the \(k\)th Catalan number (which, among other
things, counts the number of expressions containing \(k\) pairs of
parentheses correctly matched: for example, \(() (())\) would correspond
to the trivial walk above). In fact, the matter becomes more subtle
because some primes may reappear without taking us one step further back
to the origin of the walk; for instance, in the above, we might have
\(p_4=p_1\), and that is a possibility that is not recorded by a simple
pattern of correctly matched parentheses--- yet it must be considered
separately. Here again we make a mental note. 

 It is, incidentally, no coincidence that, when we try to draw the
trivial walk above, we produce a tree:

\begin{center}
\begin{tikzpicture}[thick,scale=0.9, every node/.style={transform shape}]
\tikzstyle{membre}= [rectangle]
\tikzstyle{operation}=[->,>=latex]
\tikzstyle{etiquette}=[midway,fill=black!20]
\node[membre] (n) at (0,2) {$n$};
\node[membre] (n1) at (3,3) {$n+p_1$};
\node[membre] (n3) at (3,1) {$n-p_3$};
\node[membre] (n34) at (6,2) {$n-p_3+p_4$};
\node[membre] (n36) at (6,0) {$n-p_3+p_6$};
\draw[operation] (n)--(n1);
\draw[operation] (n)--(n3);
\draw[operation] (n3)--(n34);
\draw[operation] (n3)--(n36);
\end{tikzpicture}
\end{center}

Any trivial walk gives us a tree (or rather a tree traversal) when drawn.

Now let us look at walks that fall into neither of the two classes just
discussed; that is, walks where we do have some repeated primes
\(p_i=p_{i'}\) even after we reduce the walk.
(When we say we
\emph{reduce} a walk, we mean an analogous procedure to that of reducing
a word.)
Then, far from being
independent, the condition

\[
p_i|n + \sigma_1 p_1 + \dotsc + \sigma_{i-1} p_{i-1}
\]

either implies or contradicts the condition

\[
p_i=p_{i'}|n+\sigma_1 p_1 + \dotsc + \sigma_{i'-1} p_{i'-1}
\]

for given \(\{(\sigma_i,p_i)\}_i\), depending on whether

\[
p_i|\sigma_i p_i + \sigma_{i+1} p_{i+1} + \dotsc + \sigma_{i'-1} p_{i'-1}.
\]

We may draw another graph, emphasizing the two edges with the same label
\(\pm p_i\):

\begin{center}
\begin{tikzpicture}[thick,scale=0.9, every node/.style={transform shape}]
\tikzstyle{membre}= [rectangle]
\tikzstyle{operation}=[->,>=latex]
\tikzstyle{etiquette}=[midway,fill=black!20]
\node[membre] (nim1) at (0,0) {$n+\dots +\sigma_{i-1} p_{i-1}$};
\node[membre] (ni) at (1,2) {$n+\dots+\sigma_i p_i$};
\node[membre] (nmid) at (5,2.5) {$\dots$};
\node[membre] (nim) at (10,2) {$n+\dots+\sigma_i p_i + \dots + \sigma_{i'-1} p_{i'-1}$};
\node[membre] (nip) at (9,0) {$n+\dots+\sigma_i p_i + \dots +
  \sigma_{i'} p_{i'}$};
\draw[operation, ultra thick, densely dashed] (nim1)--(ni) node[midway,left]{$\sigma_i p_i$};
\draw[operation] (ni) to[out=20,in=160] (nmid);
\draw[operation] (nmid) to[out=20,in=160] (nim);
\draw[operation, ultra thick, densely dashed] (nim)--(nip) node[midway,left]{$\sigma_{i'} p_{i'}=\sigma_{i'} p_i$};
\end{tikzpicture}
\end{center}

At this point it becomes convenient to introduce the assumption that
\(p\geq H_0\) for all \(p\in \mathbf{P}\). Then it is clear that, if
\(i'-i>1\) and
\(p_j\ne p_i\) for all \(i < j < i'\), the divisibility condition
\(p_i|\sigma_{i+1} p_{i+1} + \dotsc + \sigma_{i'-1} p_{i'-1}\) may
hold only for a proportion \(\ll 1/H_0\) of all tuples
\((p_{i+1},\dotsc,p_{i'-1})\).

So far, so good, except that it
is not enough to save one factor of \(H_0\), and indeed we should save a
factor of at least \(M\), which is roughly in the scale of \(H\), not
\(H_0\). Obviously, for \(\mathscr{L}\to \infty\) to hold, we need
\(H_0 = H^{o(1)}\), and so we need to save more than any constant number
of factors of \(H_0\).

We have seen three rather different cases. In general, we would like to
have a division of all walks into three classes:

\begin{enumerate}
\def\labelenumi{\arabic{enumi}.}
\tightlist
\item
  walks containing enough non-repeated primes \(p_i\) that their
  contribution is one would expect from the hoped-for limiting
  distribution;
\item
  rare walks, such as, for example, trivial walks;
\item
  walks for which there are many independent conditions of the form
  \(p_i|n+\sigma_{i+1} p_{i+1} + \dotsc + \sigma_{i'-1} p_{i'-1}\) as
  above.
\end{enumerate}

\emph{\textbf{Some initial thoughts on the third case.}} We should think
a little about what we mean or should mean by ``independent''. It is
clear that, if we have several conditions \(p|L_j(p_1,\dotsc,p_{2 k})\),
where the \(L_j\) are linear forms spanning a space of dimension \(D\),
then, in effect, we have only \(D\) distinct conditions. It is also
clear that, while having several primes \(p_i\) divide the same quantity
\(L(p_1,\dotsc,p_{2 k})\) ought to give us more information than just
knowing one prime divides it, that is true only up to a point: if
\(L(p_1,\dotsc,p_{2 k})=0\) (something that we expect to happen about
\(1/\sqrt{k} H\) of the time), then every condition of the form
\(p_i|L(p_1,\dotsc,p_{2 k})\) holds trivially.

It is also the case that we should be careful about which primes do the
dividing. Say two indices \(i\), \(i'\) are equivalent if
\(p_i=p_{i'}\). Choose your equivalence relation \(\sim\), and paint the
indices \(i\) in some equivalence classes blue, while painting the
indices \(i\) in the other equivalence classes red. It is not hard to
show, using a little geometry of numbers, that, if
\(p_{i_j}|L_j(p_1,\dotsc,p_{2 k})\) for some blue indices \(i_j\) and
linear forms \(L_j\), \(j\in J\), and the space spanned by the forms
\(L_j\) \emph{considered as formal linear combinations on the variables}
\(x_i\) \emph{for} \(i\) \emph{red} is \(D\), we can gain a factor of
at least \(H_0^D\) or so: the primes \(p_i\) for \(i\) red have to lie
in a lattice of codimension \(D\) and index \(\geq H_0^D\). A
priori, however, it is not clear which primes we should color blue and
which ones red.

We have, at any rate, arrived at what may be called the core of the
problem -- how to classify our walks in three classes as above, and how
to estimate their contribution accordingly.

\section{Graphs, operators and eigenvalues}

It is now time to step back and take a fresh look at the problem.
Matters will become clearer and simpler, but, as we will see, the core
of the problem will remain.

We have been talking about walks. Now, walks are taken in a graph.
Thinking about it for a moment, we see that we have been considering
walks in the graph \(\Gamma\) having \(V=\mathbf{N}\) as its set of
vertices and
\(E=\{{n,n+p}: n,n+p\in \mathbf{N}, p\in \mathbf{P}, p|n\}\) as its set
of edges. (In other words, we draw an edge between \(n\) and \(n+p\) if
and only if \(p\) divides \(n\).) We also considered random walks in
what we called the ``naïve model''; those are walks in the weighted
graph \(\Gamma'\) having \(\mathbf{N}\) as its set of vertices and an
edge of weight \(1/p\) between any \(n, n+p\in \mathbf{N}\) with
\(p\in \mathbf{P}\), regardless of whether \(p|n\).

\subsection{Adjacency, eigenvalues and expansion}

Questions about walks in a graph \(\Gamma\) are closely tied to the
\emph{adjacency operator} \(\textrm{Ad}_\Gamma\). This is a linear
operator on functions \(f:V\to \mathbb{C}\) taking \(f\) to a function
\(\textrm{Ad}_\Gamma f:V\to \mathbb{C}\) defined as follows: for
\(v\in V\), \[
(\textrm{Ad}_\Gamma f)(v) = \sum_{w: \{v,w\}\in E} f(w).
\] In other words, \(\textrm{Ad}_\Gamma\) replaces the value of \(f\) at
a vertex \(v\) by the sum of its values \(f(w)\) at the neighbors \(w\)
of \(v\). The connection with walks is not hard to see: for instance, it
is very easy to show that, if \(1_v:V\to \mathbb{C}\) is the function
taking the value \(1\) at \(v\) and \(0\) elsewhere, then, for any
\(w\in V\) and any \(k\geq 0\), \(((\textrm{Ad}_\Gamma)^k 1_v)(w)\) is
the number of walks of length \(k\) from \(v\) to \(w\).

The connection between \(\textrm{Ad}_\Gamma\) and our problem is very
direct, in that it can be stated without reference to random walks. We
want to show that \[
\sum_{n\in \mathbf{N}} \sum_{\sigma = \pm 1} \sum_{\substack{p\in \mathbf{P}: p|n\\ n+\sigma p\in \mathbf{N}}} 
 \lambda(n) \lambda(n+\sigma p) = o(N \mathscr{L}).
\] That is exactly the same as showing that \[
\langle \lambda, \textrm{Ad}_{\Gamma} \lambda\rangle =
 o(\mathscr{L}),
\] where \(\langle \cdot, \cdot\rangle\) is the inner product defined by
\[
\langle f,g\rangle = \frac{1}{N} \sum_{n\in \mathbf{N}} f(n) \overline{g(n)}
\] for \(f,g:V\to \mathbb{C}\).

The behavior of random walks on a graph --- in particular, the limit
distribution of their endpoints --- is closely related to the notion of
\emph{expansion}. A regular graph \(\Gamma\) (that is, a graph where
every vertex has the same degree \(d\)) is said to be an \emph{expander
graph} with parameter \(\epsilon>0\) if, for every eigenvalue \(\gamma\)
of \(\textrm{Ad}_\Gamma\) corresponding to an eigenfunction orthogonal
to constant functions, \[|\gamma|\leq (1-\epsilon) d.\] {(A few basic
remarks may be in order. Since \(\Gamma\) is regular of degree \(d\), a
constant function on \(V\) is automatically an eigenfunction with
eigenvalue \(d\). Now, \(\textrm{Ad}_\Gamma\) is a symmetric operator,
and thus it has full real spectrum: the space of all functions
\(V\to \mathbb{C}\) is spanned by a set of eigenfunctions of
\(\textrm{Ad}_\Gamma\), all orthogonal to each other; the corresponding
eigenvalues are all real, and it is easy to see that all of them are at
most \(d\) in absolute value.)}

It is clear that we need something both stronger and weaker than
expansion. (We cannot use the definition of expansion
above ``as is'' anyhow, 
in that our graph \(\Gamma\) is not regular; its
\emph{average} degree is \(\mathscr{L}\).) We need a stronger bound than
what expansion provides: we want to show, not just that
\(|\langle \lambda, \textrm{Ad}_\Gamma \lambda\rangle|\) is
\(\leq (1-\epsilon) \mathscr{L}\), but that it is \(= o(\mathscr{L})\).
{ There is nothing unrealistically strong here --- in the strongest kind
of expander graph (\emph{Ramanujan graphs}), the absolute value of every
eigenvalue is at most \(2\sqrt{d-1}\).}

At the same time, we cannot ask for
\(\langle f, \textrm{Ad}_\Gamma f\rangle/|f|_2^2 = o(\mathscr{L})\) to
hold for every \(f\) orthogonal to constant functions. Take
\(f = 1_{I_1}-1_{I_2}\), where \(I_1\), \(I_2\) are two disjoint
intervals of the same length \(\geq 100 H\), say. Then \(f\) is
orthogonal to constant functions, but \((\textrm{Ad}_\Gamma f)(n)\) is
equal to \(\omega(n) f(n)\), except possibly for those \(n\) that lie at
a distance \(\leq H\) of the edges of \(I_1\) and \(I_2\). Hence,
\(\langle f,\textrm{Ad}_{\Gamma} f\rangle/|f|_2^2\) will be close to
\(\mathscr{L}\). It follows that \(\textrm{Ad}_\Gamma\) will have at
least one eigenfunction orthogonal to constant functions and with
eigenvalue close to \(\mathscr{L}\); in fact, it will have many.

{(This observation is related to the fact that endpoint of a short
random walk on \(\Gamma\) \emph{cannot} be approximately
equidistributed, as it is in an expander graph: the edges of \(\Gamma\)
are too short for that. The most we could hope for is what we were
aiming for, namely, that the distribution of the endpoint converges to a
nice distribution, centered at the starting point.)}

We could aim to show that
\(\langle f,\textrm{Ad}_\Gamma f\rangle/|f|_2^2\) is small whenever
\(f\) is approximately orthogonal to approximately locally constant
functions, say. Since the main result in \cite{MR3488742} can be
interpreted as the statement that \(\lambda\) is approximately
orthogonal to such functions, we would then obtain what we wanted to
prove for \(f=\lambda\).

We will find it cleaner to proceed slightly differently. Recall our
weighted graph \(\Gamma'\), which was meant as a naïve model for
\(\Gamma\). It has an adjacency operator \(\textrm{Ad}_{\Gamma'}\) as
well, defined as before. (Since \(\Gamma'\) has weights \(1/p\) on its
edges,
\((\textrm{Ad}_{\Gamma'} f)(n) = \sum_{p\in \mathbf{P}} (f(n+p) + f(n-p))/p\).)
It is not hard to show, using the techniques in \cite{MR3488742}, that
\[\langle \lambda,\textrm{Ad}_{\Gamma'} \lambda\rangle = o(\mathscr{L}).\]
(In fact, what amounts to this statement has already been shown, in
\cite[Lemma 3.4--3.5]{MR3569059}; the main
ingredient is \cite[Thm.~1.3]{MR3435814}, which
applies and generalizes the main theorem in \cite{MR3488742}. Their bound
is a fair deal smaller than \(o(\mathscr{L})\).) We define the operator
\[A = \textrm{Ad}_{\Gamma}-\textrm{Ad}_{\Gamma'}.\] It will then be
enough to show that
\[\langle \lambda,A\lambda\rangle = o(\mathscr{L}),\] as then it will
obviously follow that
\[\langle \lambda,\textrm{Ad}_\Gamma \lambda\rangle = 
\langle \lambda, A\lambda \rangle + \langle \lambda, \textrm{Ad}_{\Gamma'} \lambda \rangle = o(\mathscr{L}).\]

It would be natural to guess, and try to prove, that
\(\langle f, Af\rangle = o(\mathscr{L})\) for \emph{all}
\(f:V\to \mathbb{C}\) with \(|f|_2=1\), i.e., that all eigenvalues of
\(A\) are \(o(\mathscr{L})\).

We cannot hope for quite that much. The reason is simple. For any vertex
\(n\), \(\langle A 1_n, A 1_n\rangle\) equals the sum of the squares of
the weights of the edges \(\{n,n'\}\) containing \(n\). That sum equals
\[\sum_{\substack{p\in \mathbf{P}\\ p|n}} \left(1-\frac{1}{p}\right)^2 + 
\sum_{\substack{p\in \mathbf{P}\\ p\nmid n}} \frac{1}{p^2},\] which in
turn is greater than \(1/4\) times the number \(\omega_{\mathbf{P}}(n)\)
of divisors of \(n\) in \(\mathbf{P}\). Thus, \(A\) has at least one
eigenvalue greater than \(\sqrt{\omega_{\mathbf{P}}(n)}/2\). Now,
typically, \(n\) has about \(\mathscr{L}\) divisors in \(\mathbf{P}\),
but some integers \(n\) have many more; for some rare \(n\), in fact,
\(\omega_{\mathbf{P}}(n)\) will be greater than \(\mathscr{L}^2\), and
so there have to be eigenvalues of \(A\) greater than \(\mathscr{L}\).

It is thus clear that we will have to exclude some integers, i.e., we
will define our vertex set to be some subset
\(\mathscr{X}\subset \mathbf{N}\) with small complement. We will set
ourselves the goal of proving that all of the eigenvalues of the
operator \(A|_\mathscr{X}\) defined by
\[(A|_\mathscr{X})(f) = (A(f|_\mathscr{X}))|_\mathscr{X}\] are
\(o(\mathscr{L})\). (Here \(f|_\mathscr{X}\) is just the function taking
the value \(f(n)\) for \(n\in \mathscr{X}\) and \(0\) for
\(n\not\in \mathscr{X}\).) Then, for \(f=\lambda\), or for any other
\(f\) with \(|f|_\infty\leq 1\),
\[\langle f, A f\rangle = \langle f, (A|_\mathscr{X}) f\rangle +
O\left(\sum_{n\in \mathbf{N}\setminus \mathscr{X}} 2\, (\omega_{\mathbf{P}}(n)+
\mathscr{L})\right),\]
where, if \(\mathbf{N}\setminus \mathscr{X}\) is small enough (as it
will be), it will not be hard to show that the sum within \(O(\cdot)\)
is quite small. We will then be done: obviously
\(\langle f, (A|_\mathscr{X}) f\rangle\) is bounded by the largest
eigenvalue of \(A|_\mathscr{X}\) times \(|f|_2\) (which is
\(\leq |f|_\infty\leq 1\)), and so we will indeed have
\(\langle f, A f\rangle = o(\mathscr{L})\).

We will in fact be able to prove something stronger: there is a subset
\(\mathscr{X}\subset \mathbf{N}\) with small complement such that all
eigenvalues of \(A|_\mathscr{X}\) are \[O(\sqrt{\mathscr{L}}).\] (This
bound is optimal up to a constant factor.) This is our main theorem.

We hence obtain that
\begin{equation}\label{eq:feast}\langle \lambda, A\lambda\rangle = O(\sqrt{\mathscr{L}}).\end{equation}
From \eqref{eq:feast},
we deduce the bound \begin{equation}\label{eq:olivol}
\frac{1}{\log x} \sum_{n\leq x} \frac{\lambda(n) \lambda(n+1)}{n} = O\left(\frac{1}{\sqrt{\log \log x}}\right)\end{equation}
we stated at the beginning.

More generally, we get
\(\langle f,A f\rangle=O(\sqrt{\mathscr{L}})\) for any \(f\) with
\(|f|_\infty\leq 1\), or for that matter by any \(f\) with
\(|f|_4\leq e^{100 \mathscr{L}}\) and \(|f|_2\leq 1\). We obtain plenty
of consequences besides \eqref{eq:olivol}.

\subsection{Powers, eigenvalues and closed walks}

Now that we know what we want to prove, let us come up with a strategy.

There is a completely standard route towards bounds on eigenvalues of
operators such as \(A\) (or \(A|_{\mathscr{X}}\)), relying on the fact
that the trace is invariant under conjugation. Because of this
invariance, the trace of a power \(A^{2 k}\) is the same whether \(A\)
is written taking a full family of orthogonal eigenvectors as a basis,
or just taking the characteristic functions \(1_n\) as our basis.
Looking at matters the first way, we see that
\[\textrm{Tr} (A|_\mathscr{X})^{2 k} = \sum_{i=1}^N \lambda_i^{2 k},\]
where \(\lambda_1,\lambda_2,\dotsc,\lambda_N\) are the eigenvalues
corresponding to the basis made out of eigenvectors. Looking at matters
the second way, we see that
\(\textrm{Tr} (A|_{\mathscr{X}})^{2 k} = N_{2 k}\), where \(N_{2 k}\) is
the sum over all closed walks of length \(2 k\) of the products of the
weights of the edges in each walk:
\[N_{2 k} = \sum_{n\in \mathscr{X}} \sum_{\substack{p_1,\dotsc,p_{2 k}\in \mathbf{P}\\ \sigma_1,\dotsc,\sigma_{2 k}\in \{-1,1\}\\
\forall 1\leq i\leq 2 k:
n+\sigma_1 p_1 + \dotsc + \sigma_i p_i\in \mathscr{X} \\
\sigma_1 p_1 + \dotsc + \sigma_{2 k} p_{2 k} = 0}}
\prod_{i=1}^{2 k} \left(1_{p_i|n+\sigma_1 p_1+\dotsc+\sigma_{i-1} p_{i-1}} - \frac{1}{p_i}\right)\]
where we adopt the convention \(1_\text{true}=1\), \(1_\text{false}=0\).

Since all eigenvalues are real, it is clear that
\[\lambda_i^{2 k} \leq N_{2 k}\] for every eigenvalue \(\lambda_i\).
Often, and also now, that inequality is not enough in itself for a good
bound on \(\lambda_i\). What is then often done is to show that every
eigenvalue must have multiplicity \(\geq M\), where \(M\) is some large
quantity. Then it follows that, for every eigenvalue \(\gamma\),
\[M \gamma^{2 k} \leq N_{2 k},\] and so
\(|\gamma|\leq (N_{2 k}/M)^{1/2 k}\).

We do not quite have high multiplicity here (why would we?) but we have
something that is almost as good: if there is one large eigenvalue, then
there are many mutually orthogonal functions \(g_i\) of norm \(1\) with
\(\langle g_i, A g_i\rangle\) large. Then we can bound
\(\textrm{Tr} A^{2 k}\) from below, using these functions \(g_i\) (and
some arbitrary functions orthogonal to them) as our basis, and, since
\(\textrm{Tr} A^{2 k}\) also equals \(N_{2 k}\), we can hope to obtain a
contradiction with an upper bound on \(N_{2 k}\).

For simplicity, let us start by sketching a proof that, if
\(|\langle f, A f\rangle|\) is large (\(\geq \rho \mathscr{L}\), say)
for some \(f\) with \(|f|_\infty\leq 1\), then there are many orthogonal
functions \(g_i\) of norm \(1\) and \(\langle g_i, A g_i\rangle\) large
(with ``large'' meaning \(\geq \rho \mathscr{L}/2\), say). This weaker statement
suffices for our original goal, since we may set \(f\) equal to the
Liouville function \(\lambda\).

Let \(I_1,I_2,\dotsc \subset \mathbb{N}\) be disjoint intervals of
length \(\geq 10 H/\rho\) (say) covering \(\mathbb{N}\). Edges starting
at a vertex \(v\) in \(I_i\) end at another vertex in \(I_i\), unless
they are close to the edge. Hence,
\(\sum_i |\langle f|_{I_i},A \left(f|_{I_i}\right)\rangle|\) is not much
smaller than \(|\langle f,A f\rangle|\), and then it follows easily that
\(\langle f|_{I_i}, A \left(f|_{I_i}\right)\rangle/|f|_{I_i}|_2^2\) must
be large for many \(i\). Thus, setting \(g_i = f|_{I_i}/|f|_{I_i}|\) for
these \(i\), we obtain the desired statement.

To prove truly that \(A\) has no large eigenvalues, we should proceed as
we just did, but assuming only that \(|f|_2\leq 1\), not that
\(|f|_\infty \leq 1\). The basic idea is the same, except that (a)
pigeonholing is a little more delicate, (b) if \(f\) is almost entirely
concentrated in a small subset of \(\mathbf{N}\), then we can extract
only a few mutually orthogonal functions \(g_i\) from it. Recall that we
are anyhow restricting to a set \(\mathscr{X}\subset \mathbf{N}\). A
brief argument suffices to show that we can avoid the problem posed by
(b) simply by making \(\mathscr{X}\) a little smaller (essentially:
deleting the support of such \(g_i\), and then running through the
entire procedure again), while keeping its complement
\(\mathbf{N}\setminus \mathscr{X}\) very small.

In any event: we obtain that, if, for some \(X\subset \mathbf{N}\),
\(\textrm{Tr} (A|_X)^{2 k}\) is not too large (smaller than
\((\rho \mathscr{L}/2)^{2 k} N/H\) or so) then there is a subset
\(\mathscr{X}\subset X\) with \(X\setminus \mathscr{X}\) small such that
every eigenvalue of \(A|_\mathscr{X}\) is small
(\(\leq \rho \mathscr{L}\)). It thus remains to prove that
\(\textrm{Tr} (A|_X)^{2 k}\) is small for some \(X\subset \mathbf{N}\)
with small complement \(\mathbf{N}\setminus X\).

Recall that \(\textrm{Tr} (A|_X)^{2 k} = N_{2 k}\) (with \(N_{2 k}\)
defined as above, except with \(X\) instead of \(\mathscr{X}\)) and that
\(X\) should not include integers \(n\) with many more prime divisors in
\(\mathbf{P}\) than average. Our task is to bound \(N_{2 k}\).

\subsection{A brief look back}

We have come full circle, or rather we have arrived twice at the same
place. We started with a somewhat naïve approach that lead us to random
walks. Then we took a step back and analyzed the situation in a way that
turned out to be cleaner; for instance, the problem involving
\(\textrm{err}_2^{1/k}\) vanished. As it happens, that cleaner approach
took us to random walks again. Surely this is a good sign.

It is also encouraging to see signs that other people have thought in
the same direction. The paper by
\href{https://arxiv.org/abs/1509.01545}{Matomäki-Radziwiłł-Tao} on sign
patterns of \(\lambda\) and \(\mu\) is based on the examination of a
graph equivalent to \(\Gamma\); what they show is, in essence, that
\(\Gamma\) is almost everywhere locally connected. Being connected may
be a much weaker property than expansion, but it is a step in the same
direction. As for expansion itself,
\href{https://arxiv.org/abs/1509.05422}{Tao} (§4) comments that ``some
sort of expander graph property'' may hold for that graph (equivalent to
\(\Gamma\)) ``or {[}for{]} some closely related graph''. He goes on to
say:

\begin{quote}
Unfortunately we were unable to establish such an expansion property, as
the edges in the graph {[}\ldots{}{]} do not seem to be either random
enough or structured enough for standard methods of establishing
expansion to work."
\end{quote}

And so we will set about to establish expansion by our methods (standard
or not).

In any event, our initial discussion of random walks is still pertinent. Recall the plan with which we concluded, namely, to divide walks into three kinds: walks with few non-repeated primes, walks imposing many independent divisibility conditions, and rare walks. This plan will shape our approach to bounding $N_{2 k}$ in the next section.

\section{Main part of the proof: counting closed walks}

Let us recapitulate. Let
\(\mathbf{N} = \{n\in \mathbb{Z}: N < n\leq 2 N\}\). We have defined a
linear operator \(A\) on functions \(f:\mathbf{N}\to \mathbb{C}\) as the
difference of the adjacency operators of two graphs \(\Gamma\),
\(\Gamma'\): \[A = \textrm{Ad}_{\Gamma} - \textrm{Ad}_{\Gamma'}.\] We
would like to show that there is a subset \(X\subset \mathbf{N}\) with
small complement \(\mathbf{N}\setminus X\) such that, for some \(k\)
that is not too small, the trace \[\textrm{Tr} (A|_X)^{2 k}\] is
substantially smaller than \(\mathscr{L}^{2 k} N\). Indeed, we will
prove that there is a constant \(C\) such that
\[\textrm{Tr} (A|_X)^{2 k} \leq (C \mathscr{L})^{k} N,\] where
\(\mathscr{L} = \sum_{p\in \mathbf{P}} 1/p\).

Incidentally, when we
say ``\(k\) not too small'', we mean ``\(k\) is larger than \(\log H\)
or so''; we already saw that we stand to lose a factor of \(H^{1/k}\)
when going from (a) a trace bound as above to (b) a bound on
eigenvalues, which is our ultimate goal. If \(k\gg \log H\), then
\(H^{1/k}\) is just a constant.

For comparison: if, as will be the case, we define \(X\) so that every
\(n\in X\) has at most \(K \mathscr{L}\) prime factors, the trivial
bound is \[\textrm{Tr} (A|_X)^{2 k} \leq ((K+1) \mathscr{L})^{2 k} N.\]

We also saw that \(\textrm{Tr} (A|_X)^{2 k}\) can be expressed as a sum
over closed walks, i.e., walks that end where they start:
\[\textrm{Tr} (A|_X)^{2 k} = \sum_{n\in X} \sum_{\substack{p_1,\dotsc,p_{2 k}\in \mathbf{P}\\ \sigma_1,\dotsc,\sigma_{2 k}\in \{-1,1\}\\
\forall 1\leq i\leq 2 k:\;
n+\sigma_1 p_1 + \dotsc + \sigma_i p_i\in X\\
\sigma_1 p_1 + \dotsc + \sigma_{2 k} p_{2 k}=0}}
\prod_{i=1}^{2 k} \left(1_{p_i|n+\sigma_1 p_1+\dotsc+\sigma_{i-1} p_{i-1}} - \frac{1}{p_i}\right).\]
Here the double sum just goes over closed walks of length \(2 k\) in the
weighted graph \(\Gamma - \Gamma'\), which has \(X\) as its set of
vertices and an edge between any two vertices \(n,n'\) whose difference
\(n'-n\) is a prime \(p\) in our set of primes \(\mathbf{P}\); the
weight of the edge is then \(1-1/p\) if \(p|n\), and \(-1/p\) otherwise.
The contribution of a walk equals the product of the weights of its
edges.

\begin{center}
\begin{tikzpicture}[thick, scale=0.85, every node/.style={transform shape}]
\tikzstyle{membre}= [rectangle]
\tikzstyle{operation}=[->,>=latex]
\tikzstyle{etiquette}=[midway,fill=black!20]
\node[membre] (n) at (0,3) {$n$};
\node[membre] (n1) at (2.5,4) {$n_1=n+\sigma_1 p_1$};
\node[membre] (n2) at (6,4.5) {$n_2 = n+\sigma_1 p_1 + \sigma_2 p_2$};
\node[membre] (aro) at (9.5,4) {$\dots$};
\node[membre] (m) at (12,3) {$n_k = n+\sigma_1 p_1 + \dots + \sigma_k p_k$};
\node[membre] (nk1) at (8.5,1) {$n_{k+1} = n+\sigma_1 p_1 + \dots + \sigma_k p_k + \sigma_{k+1} p_{k+1}$};
\node[membre] (nk2m1) at (2.75,1) {$\dots$};
\draw[operation] (n) to (n1); 
\draw[operation] (n1) to  (n2);
\draw[operation] (n2) to (aro);
\draw[operation] (aro) to(m);
\draw[operation] (m) to (nk1);
\draw[operation] (nk1) to (nk2m1);
\draw[operation] (nk2m1) to (n);
\end{tikzpicture}
\end{center}
\subsection{Cancellation}

It might be nicer to work with an expression with yet simpler weights.
First, though, let us see what gains we can get from cancellation. Let
\(p_1,\dotsc,p_{2 k}\in \mathbf{P}\) and
\(\sigma_1,\dotsc,\sigma_{2 k}\in \{-1,1\}\) be given, and consider the
total contribution of the paths they describe as \(n\) varies in \(X\).
Say there is a \(p_i\) that appears only once, i.e., \(p_j\ne p_i\) for
all \(j\ne i\). The weight of the edge from
\(n_{i-1} = n + \sigma_1 p_1 + \dotsc + \sigma_{i-1} p_{i-1}\) to
\(n_i = n + \sigma_1 p_1 + \dotsc + \sigma_i p_i\) is \(1-1/p\) if
\(p|n_{i-1}\) and \(1/p\) otherwise. The weights of all the other edges
depend on the congruence classes \(n \;\textrm{mod}\; p_j\) for all
\(j\ne i\).

Suppose for a moment that \(X = \mathbf{N}\). Then, for \(\vec{p}\),
\(\vec{\sigma}\) fixed, and \(n\) in a given congruence class
\(n \;\textrm{mod}\; p_j\) for every \(j\ne i\) (that is, \(n\) in a
given congruence class \(a +P \mathbb{Z}\) for
\(P = \prod_{p\in \{p_1,.\dotsc,p_{i-1}.p_{i+1},\dotsc,p_{2 k}\}} p\),
by the Chinese remainder theorem), the probability that \(p_i\) divides
\(n_{i-1}\) is almost exactly \(1/p_i\): the number of \(n\) in
\(\mathbf{N}\) in our congruence class \(\textrm{mod}\; P\) is
\(N/P + O^*(1)\) (that is, no less than \(N/P-1\) and no more than
\(N/P+1\)), and, for such \(n\), again by the Chinese remainder theorem,
\(p|n_{i-1}\) if and only if \(n\) lies in a certain congruence class
modulo \(p_i\cdot P\); the number of \(n\) in \(\mathbf{N}\) in that
congruence class is \(N/(p_i P) + O^*(1)\).

Hence, among all \(n\) in \(\mathbf{N} \cap (a+ P \mathbb{Z})\), a
proportion almost exactly \(1/p\) have a weight \(1-1/p\) on the edge
from \(n_{i-1}\) to \(n_i\), and a proportion almost exactly \(1-1/p\)
have a weight \(-1/p\) there instead. Since all other weights are fixed,
we obtain practically total cancellation:
\[\frac{1}{p} \left(1 - \frac{1}{p}\right)
- \left(1 - \frac{1}{p}\right) \frac{1}{p}  = 0.\] In other words, the
contribution of paths where at least one \(p_i\) appears only once is
practically nil. Hence, we can assume that, in our paths, every \(p_i\)
appears at least twice among \(p_1,p_2,\dotsc,p_{2 k}\).

Of course we do not actually want to set \(X=\mathbf{N}\), and in fact
we cannot, as we have already seen. If \(X\) is well-distributed in
arithmetic progressions, then we should still get cancellation, but it
will not be total -- there will be an error term. Much of the pain here
comes from the fact that we have to exclude numbers with too many prime
factors (meaning: \(> K \mathscr{L}\) prime factors). Suppose for
simplicity that \(X\) is the set of all numbers in \(\mathbf{N}\) with
\(\leq K \mathscr{L}\). Recall that all vertices \(n\),
\(n_1 = n+\sigma_1 p_1\),
\(n_2 = n + \sigma_1 p_1 +\sigma_2 p_2 +\dotsc\) have to be in \(X\); in
particular, \(n_{i-1}\in X\). As a consequence, the likelihood that
\(p|n_{i-1}\) is slightly lower than \(1/p\): if \(n_{i-1} = p m\), then
\(m\) is constrained to have \(\leq K \mathscr{L}-1\) prime factors, and
it is slightly more difficult for \(m\) to satisfy that constraint than
it is for an \(n\in \mathbf{N}\) to have \(\leq K \mathscr{L}\) prime
factors. We do have cancellation, but it is not total, as it is for
\(X=\mathbf{N}\). The techniques involved in estimating how much
cancellation we do have are standard within analytic number theory.

Later, we will also exclude some other integers from \(X\), besides
those having \(> K \mathscr{L}\) prime factors. We will then need to
show that the effect on cancellation is minor. Doing so will require
some arguably new techniques; we will cross that bridge when we come to
it.

To cut a long story short, the effect of cancellation will be, not that
every \(p_i\) appears at least twice among \(p_1,p_2,\dotsc,p_{2 k}\),
but that the number of ``singletons'' (primes that appear only once) is
small. More precisely, a path with \(m\) singletons will have to pay a
penalty of a factor of \(\mathscr{L}^{-m/2}\).

\subsection{Shapes. Geometry of numbers and ranks}

Let us see what we have. Write \(\mathbf{k} = \{1,2,\dotsc,2k\}\). Let
\(\mathbf{l}\) range among all subsets of \(\mathbf{k}\) . Here
\(\mathbf{l}\) will be our set of ``lit'' indices, corresponding to the
set of indices \(i\) such that
\(p_i|n+\sigma_1 p_1 +\dotsc + \sigma_{i-1} p_{i-1}\) in the above.
Every ``unlit'' index \(i\) gives us a weight of \(1/p_i\). We define an
equivalence relation \(\sim\) on \(\mathbf{k}\) by letting \(i\sim j\)
if and only if \(p_i=p_j\). Given an equivalence class \([i]\), we
define \(p_{[i]}\) to equal \(p_i\) for any (and hence every)
\(i\in [i]\). If an equivalence class \([i]\) is not completely unlit
(that is, if \([i]\cap \mathbf{l}\ne \emptyset\)), then it gives us a
weight of \(1/p_{[i]}\) (coming from
\(p_i|n+\sigma_1 p_1 + \dotsc + \sigma_{i-1} p_{i-1}\) for some lit
index \(i\in [i]\)). It is also the case that, when two indices
\(i\sim j\) are both lit, they impose the condition
\[p_{[i]}|\sigma_{i+1} p_{i+1} + \dotsc + \sigma_j p_j,\] coming from
\(p_i|n + \sigma_1 p_1 + \dotsc + \sigma_i p_i\) and
\(p_i=p_j|n+\sigma_1 p_1 + \dotsc + \sigma_j p_j\). Let us write
\(\beta_i\) as shorthand for \(\sigma_1 p_1 + \dotsc + \sigma_i p_i\);
then our condition becomes \[p_{[i]}|\beta_j-\beta_i.\]

Given a walk \(n, n+\sigma_1 p_1, n+\sigma_1 p_1 +\sigma_2 p_2,\dotsc\),
we define its \emph{shape} to be \((\sim, \vec{\sigma})\), where
\(\sim\) is the equivalence relation it induces (as above). In fact, let
us start with shapes, meaning pairs \((\sim, \vec{\sigma})\), where
\(\sim\) is an equivalence class on \(\{1,2,\dotsc,k\}\) and
\(\vec{\sigma}\in \{-1,1\}^{2 k}\). For any given shape, we will bound
the contribution of all walks of that shape. There will be some shapes
for which we will not be successful; we will later treat walks of those
shapes, and show that their contribution is small in some other way.

To rephrase what we said just before: given
\(\mathbf{l}\subset \mathbf{k}\), the contribution of a shape
\((\sim, \vec{\sigma})\) will be at most
\begin{equation}\label{eq:littlestar}\mathscr{L}^{-\frac{|\mathcal{S}(\sim)|}{2}}
\sum_{\substack{\{p_{[i]}\}_{[i]\in \Pi}, p_{[i]}\in \mathbf{P}\\i_1\sim i_2 \wedge (i_1,i_2\in \mathbf{l})\Rightarrow p_{i_1}|\beta_{i_2}-\beta_{i_1}}}
\prod_{i\not\in \mathbf{l}} \frac{1}{p_{[i]}}
\prod_{\substack{[i]\in \Pi\\ [i]\not\subset \mathbf{k}\setminus \mathbf{l}}} \frac{1}{p_{[i]}}
,\end{equation}
where \(\Pi\) is the set of
equivalence classes of \(\sim\) and \(\mathcal{S}(\sim)\) is the set of
singletons of \(\sim\), where a ``singleton'' is an equivalence class
with exactly one element. We write \(|S|\) for the number of elements of
a set \(S\).

What we have to do then is, in essence, bound the number of solutions
\((p_{[i]})_{[i]\in \Pi}\) to a system of divisibility conditions
\begin{equation}\label{eq:ddagger}
p_{[i]}|\sigma_{i+1} p_{[i+1]} + \dotsc + \sigma_j p_{[j]}.
\end{equation}
It would be convenient if the divisors \(p_{[i]}\) were all distinct
from the primes in the sums being divided. Then we could apply directly
the following Lemma, which is really grade-school-level geometry of
numbers.

\begin{lemma}
Let \(\mathbf{M}=(b_{i,j})_{1\leq i,j\leq m}\) be a
non-singular \(m\)-by-\(m\) matrix with integer entries. Assume
\(|b_{i,j}|\leq C\) for all \(1\leq i,j\leq m\). Let
\(\vec{c}\in \mathbb{Z}^m\), and let \(d_1,\dotsc,d_m\geq D\), where
\(D\geq 1\). Let \(N_1,\dotsc,N_m\) be real numbers \(\geq D\). Then the
number of solutions \(\vec{n}\in \mathbb{Z}^m\) to
\[d_i|(\mathbf{M} \vec{n} + \vec{c})_i\;\;\;\;\;\forall 1\leq i\leq m\]
with \(N_i\leq n_i\leq 2 N_i\) is at most
\[\left(\frac{2 C m}{D}\right)^m \prod_{i=1}^m N_i.\]
\end{lemma}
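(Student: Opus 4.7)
The plan is to recognize that the set of solutions is a coset of a full-rank sublattice of $\mathbb{Z}^m$, establish a lower bound on the first successive minimum of that lattice in the $\ell^\infty$ norm, and then count by chopping the box into sub-boxes each of which contains at most one lattice point.

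First I would set
\[
\Lambda_0 = \{\vec{n}\in \mathbb{Z}^m : d_i \mid (\mathbf{M}\vec{n})_i \text{ for all } 1\leq i\leq m\},
\]
observe that $\Lambda_0$ is a full-rank sublattice of $\mathbb{Z}^m$ (it contains $D_0\mathbb{Z}^m$ for $D_0 = d_1 \cdots d_m$, say), and note that the set of $\vec{n}\in \mathbb{Z}^m$ satisfying $d_i\mid (\mathbf{M}\vec{n}+\vec{c})_i$ is either empty or a coset $L = \vec{n}_0 + \Lambda_0$ of $\Lambda_0$. Translation does not change cardinalities of intersections with translated boxes, so it suffices to bound $|\Lambda_0 \cap B|$ for an arbitrary axis-aligned box $B$ of dimensions $N_1 \times \cdots \times N_m$.

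The key step is the following claim on the first $\ell^\infty$-minimum: every nonzero $\vec{v}\in \Lambda_0$ satisfies $|\vec{v}|_\infty \geq D/(Cm)$. Indeed, since $|b_{i,j}|\leq C$, each coordinate of $\mathbf{M}\vec{v}$ is bounded by $Cm|\vec{v}|_\infty$; if $|\vec{v}|_\infty < D/(Cm)$ then $|(\mathbf{M}\vec{v})_i| < D \leq d_i$, and the divisibility $d_i \mid (\mathbf{M}\vec{v})_i$ forces $(\mathbf{M}\vec{v})_i = 0$ for all $i$, hence $\mathbf{M}\vec{v}=0$. Non-singularity of $\mathbf{M}$ then gives $\vec{v}=0$, a contradiction. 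Consequently, any two distinct points of $\Lambda_0$ (and hence of the coset $L$) differ by at least $D/(Cm)$ in some coordinate.

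For the final counting step, I would cover $B$ by half-open sub-boxes of side length $\lambda_* = D/(Cm)$; by the separation just proved, each sub-box contains at most one point of $L$. Covering $[N_i, 2N_i]$ along axis $i$ requires $\lceil N_i/\lambda_* \rceil = \lceil N_i C m / D \rceil$ sub-boxes. The hypothesis $N_i \geq D$ gives $N_i Cm/D \geq Cm \geq 1$, so $\lceil N_i Cm/D \rceil \leq 2 N_i Cm /D$, and multiplying over $i$ yields the claimed bound $(2Cm/D)^m \prod_i N_i$.

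The main potential obstacle is the first-minimum step, since a naïve approach (say, bounding the covolume of $\Lambda_0$ via $[\mathbb{Z}^m : \Lambda_0]$) does not immediately give the sharp constant $2Cm/D$: one really does need the non-singularity of $\mathbf{M}$ together with the $\ell^\infty$/sup-norm bound on entries to rule out short vectors, and then to exploit this pointwise via a box-packing argument rather than via an average (volume) argument. Everything else is bookkeeping.
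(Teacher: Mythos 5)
Your proof is correct, and it takes a mildly but genuinely different route from the paper's. The paper divides the box $\prod_i [N_i,2N_i]$ into sub-boxes of side $D$ (at most $(2/D)^m\prod_i N_i$ of them) and bounds the number of solutions in each by $(Cm)^m$ by pushing the sub-box forward under $\vec{n}\mapsto \mathbf{M}\vec{n}+\vec{c}$: the image sits in a box with edges of length $CmD$, which contains at most $(Cm)^m$ points $\vec{m}$ with $d_i\mid m_i$, and non-singularity (injectivity of the map) transfers that count back to the source. You instead stay in the source space: you show the lattice $\Lambda_0$ of homogeneous solutions has first $\ell^\infty$-minimum at least $D/(Cm)$, then chop into sub-boxes of side $D/(Cm)$, each containing at most one point of the coset. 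These are dual factorizations of the same bound $(2/D)^m\cdot(Cm)^m=(2Cm/D)^m$, and both use non-singularity essentially (you to force $\vec{v}=0$ from $\mathbf{M}\vec{v}=0$, the paper to make the image count an upper bound for the source count). Your version has the advantage of isolating a clean, reusable statement about the first successive minimum of $\Lambda_0$; the paper's avoids mentioning lattices at all. Two pedantic remarks, neither affecting the result: covering the closed interval $[N_i,2N_i]$ by half-open intervals of length $\lambda_*$ may require $\lfloor N_i/\lambda_*\rfloor+1$ rather than $\lceil N_i/\lambda_*\rceil$ of them when $N_i/\lambda_*$ is an integer, but $N_i/\lambda_*+1\leq 2N_i/\lambda_*$ still holds under your hypotheses; and your inequality $Cm\geq 1$ silently uses that $C\geq 1$, which is legitimate since a non-singular integer matrix has some entry of absolute value at least $1$ (the paper's $(Cm)^m$ count needs the same fact).
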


The trivial bound is clearly \(\prod_{i=1}^m (N_i+1)\).

\begin{proof}
Divide the box \(\prod_{i=1}^m [N_i,2 N_i]\) into
\(\leq \prod_{i=1}^m \left(\frac{N_i}{D}+1\right)\leq \left(\frac{2}{D}\right)^m \prod_{i=1}^m N_i\)
\(m\)-dimensional boxes of side \(D\). The image of such a box under the
map \(\vec{n}\mapsto \mathbf{M} \vec{n} + \vec{c}\) is contained in a
box whose edges are open or half-open intervals of length \(C m D\).
Since \(d_i\geq D\), that box contains at most \((C m)^m\) solutions
\(\vec{m}\) to the equations \(d_i|m_i\).
\end{proof}

Of course we \emph{can} make our set of divisors and our set of
variables disjoint: we can choose to color some equivalence classes
\([i]\) blue and some other equivalence classes \([j]\) red, and
consider only those divisibility relations \eqref{eq:ddagger} in which
\([i]\) is colored blue. We fix \(p_{[i]}\) for \([i]\) blue, and in
fact for all non-red \([i]\), and treat \(p_{[j]}\) with \([j]\) as our
variables. We can then use the Lemma above to bound the number of values
of \((p_{[j]})_{[j]\; \text{red}}\) that satisfy our divisibility
relations.

To be precise: let \(x_{[j]}\) be a formal variable for each red
equivalence class \([j]\). Define
\begin{equation}\label{eq:cross}
v(i) = \sum_{j < i:\; [j]\;\text{red}} \sigma_j x_{[j]}.\end{equation}
Let \(r\) be the dimension of the space spanned by the differences
\(v(i_2)-v(i_1)\) with \([i_1]=[i_2]\) blue. Then we can select \(r\)
divisibility relations of the form \eqref{eq:ddagger} and \(r\) red
equivalence classes such that the matrix consisting of a row
\((\sum_{j\in [j]} \sigma_j)_{[j]\;\text{red}}\) for each equivalence
relation is non-singular. (We are just saying that a matrix of rank
\(r\) has a non-singular \(r\)-by-\(r\) submatrix.) We can then apply
our lemma.

After some book-keeping, we obtain a bound on our sum from \eqref{eq:littlestar}, namely,
\begin{equation}\label{eq:circ}\sum_{\substack{\{p_{[i]}\}_{[i]\in \Pi},\; p_{[i]}\in \mathbf{P}\\i_1\sim i_2 \wedge (i_1,i_2\in \mathbf{l})\Rightarrow p_{i_1}|\beta_{i_2}-\beta_{i_1}}}
\prod_{i\not\in \mathbf{l}} \frac{1}{p_{[i]}}
\prod_{\substack{[i]\in \Pi\\ [i]\not\subset \mathbf{k}\setminus \mathbf{l}}} \frac{1}{p_{[i]}} \leq \frac{1}{H_0^r}
\left(\frac{4 k r \log H}{\mathscr{L} \log 2}\right)^r \mathscr{L}^{|\Pi|}.\end{equation}
Here the important factor is \(1/H_0^r\). We see that we ``win'' if
\(r\) is at least somewhat large. The question is then how to choose
which equivalence classes to color red or blue so as to make the rank
\(r\) large.

\subsection{Ranks and a new graph. Sets with large boundary}

To address this question, let us define a new graph. First, though, let
us define the \emph{reduction} of a shape \((\sim, \sigma)\). A shape
clearly induces a word
\[w = x_{[1]}^{\sigma_1} x_{[2]}^{\sigma_2} \dotsc x_{[2 k]}^{\sigma_{2 k}}.\]
This word can be reduced (if it isn't already), and the resulting
reduced word induces a ``reduced shape'' \((\sim',\sigma')\). If all
representatives of an equivalence class of \((\sim,\sigma)\) disappear
during the reduction, we color that class yellow. It is the non-yellow
classes that we will color red or blue.

We define a graph \(\mathscr{G}_{(\sim,\sigma)}\) to be an undirected
graph having the non-yellow equivalence classes as its vertices, and an
edge between two vertices \(v_1\), \(v_2\) if there are \(i_1\in v_1\),
\(i_2\in v_2\) such that every equivalence class containing at least one
index \(j\in \{i_1+1,i_1+2,\dotsc,i_2-1\}\) is yellow.

(We define matters in this way, rather than simply reduce the word and
join two vertices \(v_1\), \(v_2\) if there are \(i_1\in v_1\),
\(i_2\in v_2\) such that \(i_2 = i_1+1\), because reducing the word
could create more singletons. At any rate, the idea is that, if there
are only yellow indices between \(i_1\) and \(i_2\), then
\(v(i_1) = v(i_2)\), where \(v(i)\) is defined as in \((+)\). But we are
getting ahead of ourselves.)

Let us see two examples. Let \(k=3\), and let \(\sim\) have equivalence
classes \[\{\{1,4\},\{2,5\},\{3\},\{6\}\},\] with
\(\sigma\in \{-1,1\}^{2 k}\) arbitrary. Then the graph
\(\mathscr{G}_{(\sim,\sigma)}\) is

\begin{center}
\begin{tikzpicture}[thick,scale=0.75]
\tikzstyle{every node}=[circle, draw, fill=white!50,inner sep=0pt, minimum width=4pt]
 \draw {
 (0:0) node {$\{1,4\}$} -- (0:6) node {$\{2,5\}$}
 (0:6) -- (-40:4) node {$\{3\}$}
 (0:0) -- (-40:4)
 (0:6) -- (-10:10) node {$\{6\}$}
 };
\end{tikzpicture}
\end{center}

As for our second example, let \(k = 5\),
\(\vec{\sigma}=(1,-1,1,-1,1,-1,1,1,1,-1)\), and let \(\sim\) have
equivalence classes \(\{\{1,8\},\{2,9\},\{3\},\{4,7\},\{5,6,10\}\}\).
Then the induced word is
\[w = x_{[1]} x_{[2]}^{-1} x_{[3]} x_{[4]}^{-1} x_{[5]} x_{[5]}^{-1} x_{[4]} x_{[1]} x_{[2]} x_{[5]}^{-1},\]
which has reduction
\[w' = x_{[1]} x_{[2]}^{-1} x_{[3]} x_{[1]} x_{[2]} x_{[5]}^{-1}.\]
Hence, the equivalence class \(\{4,7\}\) is colored yellow, and the
graph \(\mathscr{G}_{(\sim,\vec{\sigma})}\) is

\begin{center}
\begin{tikzpicture}[thick,scale=0.75]
\tikzstyle{every node}=[circle, draw, fill=white!50, inner sep=0pt, minimum width=4pt]
 \draw {
 (0:0) node {$\{1,8\}$} -- (0:6) node {$\{2,9\}$}
 (0:6) -- (-40:4) node {$\{3\}$}
 (0:0) -- (-40:4)
 (0:6) -- (-10:10) node {$\{5,6,10\}$}
 (0:0) -- (-10:10)
 (-40:4) -- (-10:10)
 };
\end{tikzpicture}
\end{center}
It is clear from the definition that
\(\mathscr{G}_{(\sim,\vec{\sigma})}\) is always connected: if
\(i_1<i_2<i_3<\dotsc\) are the indices in non-yellow equivalence
classes, then there is an edge from \([i_1]\) to \([i_2]\), an edge from
\([i_2]\) to \([i_3]\), etc.

Given a subset \(V'\) of the set of vertices \(V\) of a graph
\(\mathscr{G}\), write \(\mathscr{G}|_{V'}\) for the restriction of
\(\mathscr{G}\) to \(V'\), i.e., the subgraph of \(\mathscr{G}\) having
\(V'\) as its set of vertices and set of all edges in \(\mathscr{G}\)
between elements of \(V'\) as its set of edges. What happens if we
choose our coloring so that the restriction
\(\mathscr{G}|_{\textbf{blue}}\) to the set of blue vertices (named
\(\textbf{blue}\)) is connected?

\begin{lemma}
Let \((\sim,\sigma)\) be a shape, and let
\(\mathscr{G}=\mathscr{G}_{(\sim,\vec{\sigma})}\) and \(v(i)\) be as
above. Color some non-yellow vertices red and some other non-yellow
vertices blue, in such a way that, for \(\textbf{blue}\) the set of blue
vertices, the restriction \(\mathscr{G}|_\textbf{blue}\) is connected.

Then the space \(V\) spanned by the vectors
\[v(i_2)-v(i_1)\;\;\;\;\;\;\;\;\text{with}\;\;\; [i_1]=[i_2]\; \textbf{blue}\]
equals the space \(W\) spanned by all vectors
\[v(i_2)-v(i_1)\;\;\;\;\;\;\;\;\text{with}\;\;\; [i_1], [i_2]\; \text{both \bf{blue}}.\]
\end{lemma}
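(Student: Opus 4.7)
The inclusion $V\subseteq W$ is immediate, since every generator $v(i_2)-v(i_1)$ of $V$ (with $[i_1]=[i_2]$ blue) is \emph{a fortiori} a generator of $W$. The content of the lemma is the reverse inclusion $W\subseteq V$. The plan is to show that $v$ descends, modulo $V$, to a well-defined function $\bar v$ on the set of blue equivalence classes, and that this $\bar v$ is constant on $\mathscr{G}|_{\textbf{blue}}$ because of the connectedness assumption.

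First I would establish well-definedness. If $i,i'$ are two indices in the same blue class, then $v(i)-v(i')\in V$ by the defining generators of $V$, so $v$ induces $\bar v\colon\{\text{blue classes}\}\to(\text{formal span of red variables})/V$ unambiguously.

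The heart of the argument is to show that $\bar v$ takes the same value on any two blue classes $[u_1],[u_2]$ joined by an edge of $\mathscr{G}|_{\textbf{blue}}$. By the edge condition in the definition of $\mathscr{G}_{(\sim,\vec\sigma)}$ I can pick representatives $i_1\in[u_1]$, $i_2\in[u_2]$ which, after a swap if necessary, satisfy $i_1<i_2$ and have every index $j$ strictly between $i_1$ and $i_2$ lying in a yellow class. Then \eqref{eq:cross} gives
\[
v(i_2)-v(i_1)=\sum_{\substack{i_1\le j<i_2\\ [j]\text{ red}}}\sigma_j x_{[j]};
\]
the term $j=i_1$ does not contribute because $[i_1]=[u_1]$ is blue (not red), and the terms with $i_1<j<i_2$ do not contribute because each such $[j]$ is yellow (not red). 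Hence $v(i_2)-v(i_1)=0$ on the nose, and so $\bar v([u_1])=\bar v([u_2])$.

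Connectedness of $\mathscr{G}|_{\textbf{blue}}$ then means that any two blue classes are joined by a path of edges, so iterating the previous step shows that $\bar v$ is constant on blue classes. This is exactly the statement $v(i_2)-v(i_1)\in V$ for all $[i_1],[i_2]$ both blue, which is $W\subseteq V$. I do not expect any serious obstacle here beyond careful unwinding of the definitions: the graph $\mathscr{G}_{(\sim,\vec\sigma)}$ is engineered precisely so that an edge between $[u_1]$ and $[u_2]$ certifies the vanishing of some difference $v(i_2)-v(i_1)$ as a formal linear combination of red variables, and the subspace $V$ is by construction large enough to absorb the discrepancy between any two representatives of the same blue class.
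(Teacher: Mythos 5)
Your proof is correct and is essentially the paper's own argument: the paper's worked example follows a path in $\mathscr{G}|_{\textbf{blue}}$, alternately using $v(i)-v(i')\in V$ for $i,i'$ in the same blue class and $v(i)=v(i')$ when $i,i'$ are separated only by yellow classes, which is exactly your formalization that $\bar v$ is well defined modulo $V$ and locally constant, hence constant by connectedness. Nothing further is needed.
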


The proof is an exercise, and its idea may be best made clear by an
example.
\begin{proof}[Sketch of proof (or rather, a worked example)]
Say we have three blue equivalence classes, corresponding to
letters \(x\), \(y\), \(z\) in the induced word, and let them be
disposed as follows:
\[\underbrace{\;\;\;\;\;\;\;\;\;\;\;\;\;\;} \textcolor{blue}{x} \underbrace{\;\;\;\;\;\;\;\;\;\;\;\;\;\;\;\;\;\;} \textcolor{blue}{z} \underbrace{} \textcolor{blue}{y x} \underbrace{\;\;\;\;\;\;\;\;\;} \textcolor{blue}{z y} \underbrace{}\]
Call the indices of the six letters we have written
\(i_1,i_2,\dotsc,i_6\). Then the space \(V\) in the Lemma is the space
spanned by \[v_{i_4}-v_{i_1},\; v_{i_6}-v_{i_3},\; v_{i_5}-v_{i_2},\]
where the space \(W\) in the Lemma is the space spanned by all vectors
\[v_{i_r} - v_{i_{s}},\;\;\;\;\;\;\;\;\;\; 1\leq r,s\leq 6.\] It is
clear that \(V\subset W\), but why is \(W\subset V\)? Why, say, is
\(v_{i_2}-v_{i_1}\) in \(V\)? Well, let us follow a path in
\(\mathscr{G}|_{\textbf{blue}}\) going from \(x\) (the first blue
letter, i.e., the letter at position \(i_1\)) to \(z\) (the second blue
letter, i.e., the letter at position \(i_2\)): there is an edge from
(the equivalence class labeled) \(x\) to (the equivalence class
label-led) \(y\), and an edge from \(y\) to \(z\). So:

\begin{longtable}[]{@{}ll@{}}
\toprule
\endhead
\begin{minipage}[t]{0.47\columnwidth}\raggedright
\(v_{i_4}-v_{i_1}\) is in \(V\)\strut
\end{minipage} & \begin{minipage}[t]{0.47\columnwidth}\raggedright
because \(i_1\) and \(i_4\) are both in the equivalence class
\(x\)\strut
\end{minipage}\tabularnewline
\begin{minipage}[t]{0.47\columnwidth}\raggedright
\(v_{i_4}\) equals \(v_{i_3}\)\strut
\end{minipage} & \begin{minipage}[t]{0.47\columnwidth}\raggedright
because \(i_4\) and \(i_3\) are adjacent (meaning there cannot be red
indices between them; note that there is an edge from \(x\) to \(y\)
precisely because \(i_4\) and \(i_3\) are adjacent)\strut
\end{minipage}\tabularnewline
\begin{minipage}[t]{0.47\columnwidth}\raggedright
\(v_{i_6}-v_{i_3}\) is in \(V\)\strut
\end{minipage} & \begin{minipage}[t]{0.47\columnwidth}\raggedright
because \(i_6\) and \(i_3\) are both in \(y\)\strut
\end{minipage}\tabularnewline
\begin{minipage}[t]{0.47\columnwidth}\raggedright
\(v_{i_5}\) equals \(v_{i_6}\)\strut
\end{minipage} & \begin{minipage}[t]{0.47\columnwidth}\raggedright
because \(i_6\) and \(i_5\) are adjacent, as is again reflected in the
fact that there is an edge from \(y\) to \(z\),\strut
\end{minipage}\tabularnewline
\begin{minipage}[t]{0.47\columnwidth}\raggedright
\(v_{i_2}-v_{i_6}\) is in \(V\)\strut
\end{minipage} & \begin{minipage}[t]{0.47\columnwidth}\raggedright
because \(i_2\) and \(i_6\) are both in \(z\).\strut
\end{minipage}\tabularnewline
\bottomrule
\end{longtable}

Hence, \(v_{i_2}-v_{i_1}\in V\), as we have shown by following a path in
\(\mathscr{G}|_{\textbf{blue}}\) from \(x\) to \(z\). The same argument
works in general for any two indices in blue equivalence classes.
\end{proof}

Now we have to bound the rank of \(W\) from below. We first reduce our
word; yellow letters disappear. The most optimistic expectation would be
that the rank of \(W\) equal the number of gaps between blue ``chunks''
indicated by braces in our example from before:
\[\underbrace{\;\;\;\;\;\;\;\;\;\;\;\;\;\;} \textcolor{blue}{x} \underbrace{\;\;\;\;\;\;\;\;\;\;\;\;\;\;\;\;\;\;} \textcolor{blue}{z} \underbrace{} \textcolor{blue}{y x} \underbrace{\;\;\;\;\;\;\;\;\;} \textcolor{blue}{z y} \underbrace{}\]
(Chunks may have merged during reduction.) The number of gaps here is
\(4\), considered cyclically (so that the first and last gap become
one). The gaps correspond to \(v(i_2)-v(i_1)\), \(v(i_3)-v(i_2)\),
\(v(i_5)-v(i_4)\) and (lastly, or firstly) \(v(i_1)-v(i_6)\).

Imagine for a moment that each red letter appeared only once. (From now
on, all letters that are neither blue nor yellow will be colored red. In
our reduced word, all letters that are not blue are red.) Then the
optimistic expectation would hold: each of
\(v(i_2)-v(i_1), v(i_5)-v(i_4),\dotsc,v(i_1)-v(i_6)\) would be a
non-trivial formal linear combination of a non-zero number of symbols
\(x_{[j]}\), each appearing only once altogether, and so those
combinations must all be linearly independent.

Of course, we cannot ensure that each red letter will appear only once,
and in fact we are usually treating cases where most of them appear at
least twice (i.e., singletons are rare). Let us see what we can do with
a weaker assumption. What if we assume that each red letter appears at
most \(\kappa\) times?

Let us see an easy linear-algebra lemma.
\begin{lemma}
Let \(A\) be a matrix with \(n\)
rows, satisfying:

\begin{itemize}
\tightlist
\item
  every row has at least one non-zero entry,
\item
  no column has more than \(\kappa\) non-zero entries.
\end{itemize}

Then the rank of \(A\) is \(\geq n/\kappa\).
\end{lemma}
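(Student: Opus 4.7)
My plan is to prove this by a double count on a carefully chosen submatrix of $A$. Set $r = \mathrm{rank}(A)$. Because column rank equals rank, I can extract $r$ columns of $A$ that are linearly independent and whose span is the full column space of $A$; let $A'$ denote the $n \times r$ submatrix formed by those columns.

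The first thing I would verify is that every row of $A'$ is still non-zero. This is the only content-bearing step: since every column of the original matrix $A$ is a linear combination of the $r$ chosen columns, a zero row of $A'$ would force the entire corresponding row of $A$ to vanish (being a linear combination of zeros), contradicting the hypothesis that every row of $A$ has at least one non-zero entry. Hence $A'$ contains at least $n$ non-zero entries, counting one per row at minimum.

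A count in the other direction finishes the argument. Each column of $A'$ is literally a column of $A$, so it inherits the bound of at most $\kappa$ non-zero entries; summing over the $r$ columns, the total number of non-zero entries in $A'$ is at most $r\kappa$. Comparing the two counts gives $n \leq r\kappa$, i.e.\ $r \geq n/\kappa$, as required. I do not foresee any real obstacle here: the only conceptual move is to restrict to a maximal linearly independent family of columns, after which the row-nonzero hypothesis propagates to the submatrix and a one-line double count closes the proof.
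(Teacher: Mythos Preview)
Your proof is correct, and it takes a genuinely different route from the paper's. The paper argues constructively: it greedily builds a list $S$ of columns, at each step adding a column with a nonzero entry in some row not yet ``hit'' by $S$. This echelon-like structure makes the columns in $S$ linearly independent by construction, and the stopping criterion together with the $\kappa$-bound per column forces $|S|\geq n/\kappa$. You instead start from any column basis of size $r=\mathrm{rank}(A)$ and use the spanning property to show the submatrix $A'$ has no zero rows, then finish with a double count of nonzero entries. Your argument is slicker in that it avoids the greedy construction and appeals directly to standard linear algebra (column rank equals rank, a basis spans the column space); the paper's argument is more hands-on and self-contained, exhibiting the independent columns explicitly rather than invoking the existence of a basis. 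Both are one-paragraph proofs and neither has any real advantage over the other here.
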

\begin{proof}
 We will construct a finite list \(S\) of columns, starting
with the empty list. At each step, if there is a row \(i\) such that the
\(i\)th entry of every column in \(S\) is \(0\), include at the end of
\(S\) a column whose \(i\)th entry is non-zero. Stop if there is no such
row.

When we stop, we must have \(\kappa\cdot |S|\geq n\), as otherwise there
would still be a row in which no element of \(S\) would have a non-zero
entry. Since, for each column in \(S\), there is a row in which that
column has a non-zero entry and no previous column in \(S\) does, we see
that the columns in \(S\) are linearly independent. Hence
\(\textrm{rank}(A)\geq |S|\geq n/\kappa\).
\end{proof}

Now we see what to do: let \(A\) be a matrix with columns corresponding
to red equivalence classes, and rows corresponding to gaps between blue
chunks, with the entry \(a_{i [j]}\) being the number of times the red
letter corresponding to column \([j]\) appears in the gap corresponding
to row \(i\) (counting appearances as \(x_{[j]}^{-1}\) as negative
appearances). Each column has no more than \(\kappa\) non-zero entries
because, by assumption, each red equivalence class contains at most
\(\kappa\) elements. We still need to show that no or few rows are full
of zeros.

A row is full of zeros iff every letter \(x\) in the corresponding gap
appears an equal number of times as \(x\) and as \(x^{-1}\) (i.e., every
equivalence class has as many representatives \(i\) with \(\sigma_i=1\)
as with \(\sigma_i=-1\) within the gap). Let us call such gaps
\emph{invalid}.

There is a condition that limits how many such gaps there can be while
at the same time ensuring that an equivalence class contains at most
\(\kappa\) elements (or not quite, but something that is as good). Let
\((\sim',\sigma')\) (of length \(2 k'\)) be the reduction of the shape
\((\sim,\sigma)\). Let us say that we see a \emph{revenant} when there
are indices \(i\), \(i'\) such that (a) \(i\sim i'\), and (b) there is a
\(j\not\sim i\) with \(i<j<i'\). (In other words, \(x_{[i]}\) has come
back after going away.) We say that there are \(\kappa\) \emph{disjoint
revenants} if there are
\[1\leq i_1<\jmath_1<i_1'\leq i_2<\jmath_2<i_2'\leq \dotsc \leq
 i_\kappa < \jmath_\kappa < i_\kappa'\leq 2 k'\] with \(i_j\sim' i_j'\)
and \(i_j\not\sim' \jmath_j\) for \(1\leq j\leq 2 k\). Thus, for
example, in \[xxz\dotsc x^{-1} y w \dotsc y v \dotsc y,\] we see three
disjoint revenants (with \(x\) at positions \(i_1\) and \(i_1'\), and
\(y\) at positions \(i_2\), \(i_2' = i_3\) and \(i_3'\)).

Let us impose the condition that there cannot be more than \(\kappa\)
disjoint revenants in our walk. (We will be able to assume this
condition by rigging the definition of \(X\) later.) Then it follows
immediately that the appearances of a letter form at most \(\kappa\)
contiguous blocks in the reduced word. Hence, a red letter cannot appear
in more than \(\kappa\) gaps. We also see that there cannot be more than
\(\kappa\) invalid gaps: a gap is a non-empty reduced subword, and, if a
letter \(x\) appears in a reduced, non-trivial word as many times as
\(x\) and as \(x^{-1}\), either the pattern \(x \dotsc x^{-1}\) or the
pattern \(x^{-1} \dotsc x\) appears in the word, with ``\(\dotsc\)''
standing for a non-empty subword consisting of letters that are not
\(x\). Thus, \(> \kappa\) invalid gaps would give us \(>\kappa\)
revenants, all disjoint.

It then follows, by the easy linear-algebra lemma above, that
\[\textrm{dim}(W)\geq \frac{s-\kappa}{\kappa} = \frac{s}{\kappa}-1,\]
where \(s\) is the number of gaps.

The question is then: how do you choose which letters to color blue and
which to color red so that the number \(s\) of gaps is large?

\subsection{Spanning trees and boundaries}

Let us first assume that there are no yellow letters in the non-reduced
word, as that is a somewhat simpler case. Then the number of gaps equals
the number of red letters \(x_i\) such that \(x_{i-1}\) is blue (or
\(x_{2 k}\) is blue, if \(i=1\)). That number is bounded from below by
\[\frac{1}{2} |\partial \textbf{blue}|,\] where
\(\partial \textbf{blue}\) is the set of all red equivalence classes
\([i]\) such that there is a blue equivalence class \([j]\) connected to
\([i]\) by an edge in \(\mathscr{G} = \mathscr{G}_{\sim,\sigma}\)
(meaning that \([j]\) contains an index \(j\) and \([i]\) contains an
index \(i\) such that \(i\) and \(j\) are separated only by yellow
letters; since there are no yellow letters, that means that \(i=j+1\) or
\(i=j-1\) (or one of \(i\), \(j\) is \(1\) and the other one is
\(2 k\))).

The question is then how to choose the set \(\textbf{blue}\) of
equivalence classes to be colored blue in such a way that
\(\partial \textbf{blue}\) is large. Here \(\textbf{blue}\) can be any
set of vertices such that \(\mathscr{G}|_\textbf{blue}\) is connected.
So, in general: given a connected undirected graph \(\mathscr{G}\), how
do we choose a set \(\textbf{blue}\) of vertices so that
\(\mathscr{G}|_\textbf{blue}\) is connected and
\(\partial \textbf{blue}\) is large?

A \emph{spanning tree} of a graph \(\mathscr{G}=(V,E)\) is a subgraph
\((V,E')\) (where \(E'\subset E\)) that is a tree (i.e., has no cycles)
and has the same set of vertices \(V\) as \(\mathscr{G}\). Given a
spanning tree of \(\mathscr{G}\), we can define \(\textbf{blue}\) to be
the set of internal nodes of \(\mathscr{G}\), that is, the set of
vertices that are not leaves. Then \(\textbf{blue}\) is connected, and
\(\partial \textbf{blue}\) equals the set of leaves. The question is
then: is there a spanning tree of \(\mathscr{G}\) with many leaves?

Here there is a result from graph theory that we can just buy off the
shelf.

\begin{prop}[Kleiman-West, 1991; see also Storer, 1981,
Payan-Tchuente-Xuong, 1984, and Griggs-Kleitman-Shastri, 1989] Let
\(\mathscr{G}\) be a connected graph with \(n\) vertices, all of degree
\(\geq 3\). Then \(\mathscr{G}\) has a spanning tree with \(\geq n/4+2\)
leaves.
\end{prop}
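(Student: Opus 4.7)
The plan is to use an extremal-tree argument. Let $T$ be a spanning tree of $\mathscr{G}$ that maximizes the number of leaves, and, among all such trees, one that also maximizes a secondary quantity, say the number of internal vertices of $T$-degree $\geq 3$ (equivalently, minimizes the number of degree-$2$ internal vertices). Write $L$ for the set of leaves of $T$, $B$ for the set of \emph{branch vertices} (internal vertices of $T$-degree $\geq 3$) and $P$ for the set of \emph{bare} internal vertices (those of $T$-degree exactly $2$), so that $n=|L|+|B|+|P|$. The elementary identity $\sum_v (\deg_T v - 1)=n-2$, combined with $\deg_T v \geq 3$ on $B$, gives the well-known bound $|L|\geq |B|+2$.

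Next I would decompose $T$ into its \emph{bare paths}, that is, maximal subpaths whose internal vertices all lie in $P$; each bare path has both endpoints in $L\cup B$, and a standard contraction argument shows that there are exactly $|L|+|B|-1$ of them. The central structural claim is that bare paths cannot be long. The proof is by a local exchange argument: each $v\in P$ satisfies $\deg_\mathscr{G}(v)\geq 3$ but $\deg_T(v)=2$, so $v$ has at least one $\mathscr{G}$-neighbor $u$ outside $T$. Adding the edge $\{v,u\}$ to $T$ creates a unique cycle $C$; removing any other edge of $C$ produces a new spanning tree $T'$. A case analysis on the type of $u$ (leaf, branch, or bare) and on where $u$ sits relative to the bare path containing $v$ shows that if the bare path is longer than some absolute constant, one can always choose the deleted edge so that $T'$ strictly improves $T$ in the chosen extremal order---either producing strictly more leaves, or the same number of leaves together with strictly more branch vertices. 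This contradicts the maximality of $T$, so bare paths are short.

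The main obstacle is extracting the sharp constant $1/4$. A crude version of the swap argument shows only that bare paths have length $O(1)$, which yields $|L|\geq n/c + O(1)$ for some constant $c>4$. To bring $c$ down to $4$ one has to treat the different kinds of bare paths asymmetrically (branch-to-branch versus branch-to-leaf, and both versus the degenerate case in which $T$ is itself a path), use the minimum-degree hypothesis globally rather than just vertex-by-vertex, and possibly refine the secondary extremal criterion to rule out certain configurations in which several short swaps would be needed simultaneously. Once the refined bare-path bounds are in hand, they combine with $|L|\geq |B|+2$ and $n=|L|+|B|+|P|$ to give $|L|\geq n/4+2$ by a short arithmetic calculation; managing this bookkeeping so the constants come out exactly is precisely what the Kleitman--West proof does.
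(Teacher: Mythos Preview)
The paper does not prove this proposition at all: it is explicitly introduced as ``a result from graph theory that we can just buy off the shelf'' and is simply quoted with attribution to Kleitman--West and the earlier papers. There is therefore no ``paper's own proof'' to compare against; the expository text uses the proposition as a black box and moves directly to deducing the corollary about graphs with at least $n$ vertices of degree $\geq 3$.

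As for your sketch itself: the extremal-tree-plus-swap framework you describe is indeed the skeleton of the Kleitman--West argument, and the inequality $|L|\geq |B|+2$ together with the bare-path decomposition into $|L|+|B|-1$ paths is correct and standard. But what you have written is an outline, not a proof: you explicitly defer the entire content---the case analysis that bounds bare-path lengths and the bookkeeping that produces the constant $1/4$---to ``precisely what the Kleitman--West proof does.'' The swap argument as you state it is also too vague to be checkable: you assert that if a bare path is ``longer than some absolute constant'' one can always improve $T$, but you do not exhibit the swap in any case, and the actual argument needs, for instance, the observation that non-tree neighbors of consecutive bare vertices cannot both be leaves (else a swap gains a leaf), together with several further restrictions of this type. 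If the intent is to supply a self-contained proof, those cases must actually be written out; if the intent is to match the paper, then the correct move is simply to cite the result, as the paper does.
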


Using this Proposition, we prove:

\begin{cor}
Let \(\mathscr{G}\) be a connected graph such that
\(\geq n\) of its vertices have degree \(\geq 3\). Then \(\mathscr{G}\)
has a spanning tree with \(\geq n/4+2\) leaves.
\end{cor}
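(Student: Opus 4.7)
The plan is to deduce the corollary from the Proposition by a reduction to the minimum-degree-$3$ case. First, I would iteratively peel off degree-$1$ vertices of $\mathscr{G}$ to obtain the $2$-core $\mathscr{G}_1$ (the maximal subgraph with minimum degree $\geq 2$); second, I would smooth out each degree-$2$ vertex of $\mathscr{G}_1$ (delete it and replace its two incident edges by a single edge between its neighbors) to arrive at a graph $\mathscr{G}^*$ of minimum degree $\geq 3$. Applying the Proposition to $\mathscr{G}^*$ gives a spanning tree $T^*$ with at least $|V(\mathscr{G}^*)|/4 + 2$ leaves, and I would then lift $T^*$ to a spanning tree $T$ of $\mathscr{G}$ by reinserting the smoothed paths and attaching the forest of peeled vertices (whose edges are forced, being bridges in $\mathscr{G}$).

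The heart of the argument is a bookkeeping of leaves. Partition $V_{\geq 3}(\mathscr{G})$ according to the fate of each vertex under the reduction: let $a$, $b$, $c$ count, respectively, the vertices that survive in $V(\mathscr{G}^*)$, those smoothed away as degree-$2$ vertices of $\mathscr{G}_1$, and those peeled into the pendant forest. Then $a = |V(\mathscr{G}^*)|$ and $a+b+c \geq n$. For each pendant tree $\tau$ rooted at a $2$-core vertex $v$, the classical inequality $V_1(\text{tree}) \geq V_{\geq 3}(\text{tree}) + 2$ supplies at least $1 + |V_{\geq 3}(\mathscr{G}) \cap (\tau \setminus \{v\})|$ degree-one vertices of $\mathscr{G}$ in $\tau$; summing, the total number of degree-one vertices of $\mathscr{G}$ is at least $P + b + c$, where $P$ counts the $v \in V(\mathscr{G}^*)$ carrying at least one pendant tree. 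Meanwhile, the leaves of $T^*$ at pendant-free vertices of $\mathscr{G}^*$ remain leaves of $T$, contributing at least $a/4 + 2 - P$ further leaves. Combining,
\[
L(T) \;\geq\; (P + b + c) + (a/4 + 2 - P) \;=\; a/4 + b + c + 2 \;\geq\; (a+b+c)/4 + 2 \;\geq\; n/4 + 2,
\]
as required.

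The main obstacles are the degenerate cases: when $\mathscr{G}^*$ is too small or absent for the Proposition to apply literally (notably when $\mathscr{G}$ is a tree, so $\mathscr{G}^* = \emptyset$, or when $\mathscr{G}_1$ is a single cycle), and when smoothing creates multi-edges. The tree case is handled directly by the same inequality $V_1(\mathscr{G}) \geq V_{\geq 3}(\mathscr{G}) + 2 \geq n+2$; the cycle case by a direct construction (break the cycle at an edge whose endpoints have no pendants, when possible); and the multi-edge case by either avoiding the problematic smoothings or by invoking a multigraph variant of the Proposition. A more delicate point is that the ``lifting'' of $T^*$ requires a careful choice of which edge to omit from each unused $\mathscr{G}^*$-path so that $T^*$-leaves at pendant-free vertices genuinely remain $T$-leaves; this is achievable by prioritizing ``outward'' removals at such vertices.
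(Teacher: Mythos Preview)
The paper does not actually supply a proof of this corollary; it only remarks that it follows by ``less than a page of casework and standard tricks'' from the Kleiman--West proposition, or alternatively by modifying Kleiman and West's argument directly. Your reduction --- pass to the $2$-core, suppress degree-$2$ vertices, apply the Proposition, and lift back --- is a natural reading of the first route.

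However, your leaf accounting has a genuine gap. The claim that the lift can be arranged so that all pendant-free $T^*$-leaves remain leaves of $T$ is false in general, and the displayed inequality $L(T) \geq (P+b+c) + (a/4+2-P) = a/4 + b + c + 2$ can fail. Take $\mathscr{G}^* = K_4$ on $\{A,B,C,D\}$, subdivide each of the six edges once to form $\mathscr{G}_1$, and attach a single pendant edge at each of the six subdivision vertices to form $\mathscr{G}$. Then $n = 10$, $a = 4$, $b = 6$, $c = 0$, $P = 0$, and your bound predicts $L(T) \geq 9$. But in any spanning tree of $\mathscr{G}$ each subdivision vertex must carry its pendant edge and hence has tree-degree $\geq 2$, so it is never a leaf; and among $A,B,C,D$ at most one can be a leaf (if $A$ and $B$ were both leaves, the vertex $x_{AB}$ together with its pendant would be disconnected). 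Thus $L(T) \leq 7$. Concretely, with $T^*$ the star centred at $A$, the three non-tree paths of $\mathscr{G}_1$ join pairs among $\{B,C,D\}$, and each must dump its one internal vertex onto one of those endpoints; ``outward removals'' cannot protect all three.

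The approach is repairable: when a pendant-free $T^*$-leaf is destroyed by an attached broken path, the far end of that path becomes a new $T'$-leaf, and one can track this trade together with whether that new endpoint itself carries a pendant (in which case it is a $b$-vertex whose pendant has already been counted in $|V_1(\mathscr{G})|$). Getting this exchange to close with only $b/4$ rather than $b$ needed on the right-hand side is exactly the ``casework'' the paper alludes to, and it is more delicate than your outline indicates. The paper's alternative --- modifying the Kleiman--West greedy construction to tolerate degree-$1$ and degree-$2$ vertices directly --- avoids the lifting step altogether.
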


We omit the proof of the corollary, as it consists just of less than a
page of casework and standard tricks. Alternatively, we can prove it
from scratch in about a page by modifying Kleiman and West's proof.

(It is clear that some condition on the degrees, as here, is necessary;
a spanning tree of a cyclic graph (every one of whose vertices has
degree \(2\)) has no leaves.)

Before we go on to see what do we do with shapes \((\sim,\vec{\sigma})\)
such that \(\mathscr{G}_{(\sim,\vec{\sigma})}\) does \emph{not} have
many vertices with degree \(\geq 3\), let us remove the assumption that
there are no yellow letters.

So, let us go back to counting gaps between blue chunks. For any two
distinct non-yellow equivalence classes \([i]\), \([j]\), let us draw an
arrow from \([i]\) to \([j]\) if there are representatives \(i\in [i]\),
\(j\in [j]\) that survive in the reduced word, and such as that all
letters between \(i\) and \(j\) disappear during reduction. (If \(j<i\),
then ``between'' is to be understood cyclically, i.e., the letters
between \(i\) and \(j\) are those coming after \(i\) or before \(j\).)
We draw each arrow only once, that is, we do not draw multiple arrows.

For instance, in our example
\(w = x_{[1]} x_{[2]}^{-1} x_{[3]} x_{[4]}^{-1} x_{[5]} x_{[5]}^{-1} x_{[4]} x_{[1]} x_{[2]} x_{[5]}^{-1}\)
from before,

\begin{center}
\begin{tikzpicture}[thick]
\tikzstyle{vertex}=[circle, draw, fill=white!50, inner sep=0pt, minimum width=4pt]
\node[vertex] (1) at (0:0) {$\{1,8\}$};
\node[vertex] (2) at (0:6) {$\{2,9\}$};
\node[vertex] (3) at (-40:4) {$\{3\}$};
\node[vertex] (5) at (-10:10) {$\{5,6,10\}$};
 \draw {
 (1) -- (2)
 (2) -- (3)
 (1) -- (3)
 (2) -- (5)
 (1) -- (5)
 (3) -- (5)
};
\path (1) edge [->, >=latex, bend left=15, color=olive] (2);
\path (2) edge [->, >=latex, bend right=15, color=olive] (3);
\path (3) edge [->, >=latex, bend left=20, color=olive] (1);
\path (2) edge [->, >=latex, bend left=15, color=olive] (5);
\path (5) edge [->, >=latex, bend left=10, color=olive] (1);
\end{tikzpicture}
\end{center}

It is obvious that every vertex has an in-degree of at least \(1\).

For \(S\) a set of vertices, define the \emph{out-boundary}
\(\vec{\partial} S\) to be the set of all vertices \(v\) not in \(S\)
such that there is an arrow going from some element of \(S\) to \(v\).
Then, whether or not there are yellow letters, the number of red gaps
\(\underbrace{}\) in the reduced word is at least
\(|\vec{\partial} \textbf{blue}|\).

\begin{lemma} Let \(G\) be a directed graph such that every vertex has
positive in-degree. Let \(S\) be a subset of the set vertices of \(G\).
Then there is a subset \(S'\subset S\) with \(|S'|\geq |S|/3\) such
that, for every \(v\in S'\), there is an arrow from some vertex not in
\(S'\) to \(v\).
\end{lemma}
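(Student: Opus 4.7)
The plan is to reduce the claim to $3$-colouring a very sparse auxiliary graph. Using the positive-in-degree hypothesis, for each $v\in S$ I would fix once and for all a \emph{parent} $p(v)\in V(G)$, that is, some vertex with an arrow $p(v)\to v$. I would then build an undirected auxiliary graph $H$ on vertex set $S$, putting an edge $\{v,p(v)\}$ whenever $v\in S$ and also $p(v)\in S$ (and discarding parents that land outside $S$).

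The key structural observation is that, orientating each such edge as $v\to p(v)$, every vertex of $S$ has out-degree at most $1$ in $H$. In particular $H$ has at most $|S|$ edges, so each of its connected components has at most as many edges as vertices, i.e.\ contains at most one cycle. Such a graph (a pseudoforest) is $3$-colourable by the standard argument: on each component, $2$-colour a spanning tree greedily, then use the third colour if the remaining extra edge happens to close an odd cycle. Fix such a colouring $c:S\to\{1,2,3\}$, and let $S'\subset S$ be the largest colour class, so that $|S'|\geq|S|/3$.

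It then remains only to check that $S'$ has the property required. Let $v\in S'$. If $p(v)\notin S$ then certainly $p(v)\notin S'$, and the arrow $p(v)\to v$ witnesses the conclusion. Otherwise $p(v)\in S$, in which case $\{v,p(v)\}$ is an edge of $H$ and the colouring forces $c(p(v))\neq c(v)$, so again $p(v)\notin S'$; the same arrow works.

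There is not really a serious obstacle here: the only conceptual ingredient is the pseudoforest observation, and the rest is bookkeeping. The one point that would need a word of comment is the implicit assumption that $G$ has no self-loops (if $v$ were its only in-neighbour, no choice of $S'\ni v$ could succeed), but this is automatic in the arrow graph of the previous subsection, where arrows only go between distinct equivalence classes.
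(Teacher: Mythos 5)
Your proof is correct, and it takes a genuinely different (and in one respect cleaner) route than the paper's. Both arguments begin the same way, by selecting a single in-arrow per vertex --- your parent map $p$ is exactly the paper's ``remove arrows until the in-degree of every vertex is exactly $1$'', restricted to the vertices where it is needed. The divergence is in what happens next. The paper analyzes the pruned digraph directly, asserting it is a disjoint union of cycles and then picking alternate vertices along each cycle (or along each maximal run of $S$-vertices inside a cycle); the worst case, an all-$S$ triangle, is what forces the constant $1/3$. You instead pass to the undirected graph of parent edges inside $S$, observe it is a pseudoforest, invoke $3$-colourability, and take the largest colour class; the same triangle is the extremal case. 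What your version buys is robustness: after pruning to in-degree exactly $1$, the reversed graph is a functional graph, so a weakly connected component is in general a cycle with out-trees attached rather than a bare cycle, and the paper's sketch silently ignores the tree parts (its case analysis only speaks of vertices ``in a cycle''). Your pseudoforest bound $m\le n$ per component absorbs the trees automatically, so no case analysis is needed. What the paper's version buys is that it is constructive in a very hands-on way and makes the role of odd cycles visible. You are also right to flag the self-loop caveat: if some $v$ has itself as its only in-neighbour the lemma is false as stated (take $S=\{v\}$), and both proofs implicitly assume this does not happen; in the intended application the arrow graph joins distinct equivalence classes, so the hypothesis is satisfied.
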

\begin{proof}
 The first step is to remove arrows until the in-degree of
every vertex is exactly 1. Then \(G\) is a union of disjoint cycles. If
all vertices in a cycle are contained in \(S\), we number its vertices
in order, starting at an arbitrary vertex, and include in \(S'\) the
second, fourth, etc. elements. If no vertices in a cycle are in \(S\),
we ignore that cycle. If some but not all vertices in a cycle are in
\(S\), the vertices that are in \(S\) fall into disjoint subsets of the
form \(\{v_1,\dotsc v_r\}\), where there is an arrow from some \(v\) not
in \(S\) to \(v_1\), and an arrow from \(v_i\) to \(v_{i+1}\) for
\(1\leq i\leq r-1\); then we include \(v_1,v_3,\dotsc\) in \(S'\).
\end{proof}

We let \(S\) be the set of leaves of our spanning tree, and define
\(\textbf{red}\) to be the set \(S'\) given by the Lemma;
\(\textbf{blue}\) is the set of all other non-yellow equivalence
classes. Then the number of gaps is \(\geq (n/4+2)/3\), where \(n\) is
the number of vertices of degree \(\geq 3\) in
\(\mathscr{G}_{(\sim,\vec{\sigma})}\). Hence, by our work up to now,
\[\textrm{dim}(W) \geq \frac{1}{\kappa} \frac{\frac{n}{4}+2}{3} -1 \geq \frac{n}{12 \kappa} - 1,\]
and so, if \(n\) is even modestly large, we win by a large margin: we
obtain a factor nearly as small as \(1/H_0^{\frac{n}{12 \kappa}}\) in
\eqref{eq:circ}.

\emph{Note.} Had we been a little more careful, we would have obtained a
bound of \(\dim(W)\geq \frac{n}{50 \log \kappa} - \frac{\kappa}{2}\) or
so. This improvement -- which involves drawing, and considering,
multiple arrows -- would affect mainly the allowable range of \(H_0\) in
the end. We will remind ourselves of the matter later.

\subsection{Shapes with low freedom. Writer-reader arguments.}

The question now is what to do with walks of shapes
\((\sim,\vec{\sigma})\) for which \(\mathscr{G}_{(\sim,\vec{\sigma})}\)
does not have many vertices of degree \(\geq 3\).

Let us first give an argument that is sufficient when the word given by
our walk is already reduced; we will later supplement it with an
additional argument that takes care of the reduction. Let
\(\mathbf{n}\subset \{1,2,\dotsc,2 k\}\) be the set of indices that
survive the reduction. It is enough to define an equivalence relation
\(\sim\) on \(\mathbf{n}\) to define the graph
\(\mathscr{G}_\sim = \mathscr{G}_{\sim,\sigma}\) we have been
considering. (We do not need to specify \(\vec{\sigma}\), as its only
role was to help determine which letters are yellow.) Assume that
\(\mathscr{G}_\sim\) has \(\leq \nu\) vertices of degree \(\geq 3\). Let
\(\kappa\) be, as usual, an upper bound on the number of disjoint
revenants; in particular, for any equivalence class \([i]\), there are
at most \(\kappa\) elements \(i\in [i]\) such that the following element
of \(\mathbf{n}\) is not in \([i]\). We claim that the number of
equivalence classes \(\sim\) on \(\mathbf{n}\) satisfying these two
constraints (given by \(\nu\) and \(\kappa\)) is
\[\leq 5^{|\mathbf{n}|} (2 k)^{(\kappa-1) \nu + 2}.\]

We will prove this bound by showing that we can determine an equivalence
class of this kind by describing it by a string \(\vec{s}\) on \(5\)
letters with indices in \(\mathbf{n}\), together with some additional
information at each of at most \((\kappa-1) \nu+2\) indices. The idea is
that, if an index lies in an equivalence class that is a vertex of
degree \(1\) or \(2\) in \(\mathscr{G}_\sim\), then there are very few
possibilities for the equivalence classes on which the index just
thereafter may lie, namely, \(1\) or \(2\) possibilities.

We let the index \(i\) go through \(\mathbf{n}\) from left to right. If
\([i]\) is in an equivalence class we have not seen before, we let
\(s_i = *\). Assume otherwise. Let \(i_-\) be the element of
\(\mathbf{n}\) immediately preceding \(i\). If \([i]=[i_-]\), let
\(s_i=0\). If \([i_-]\) is a vertex of degree \(\leq 2\) and \(i\) is in
an equivalence class that we have already seen next to \([i_-]\) (that
is, just before or just after \([i_-]\) in \(\mathbf{n}\)), then we let
\(s_i=1\) or \(s_i=2\) depending on which one of those \(\leq 2\)
equivalence classes we mean (the first one or the second one to appear).
In all remaining cases, we let \(s_i = \cdot\), and specify our
equivalence class explicitly, by giving an index \(j<i\) in the same
equivalence class.

Let us give an example. Let \(k= 8\),
\(\mathbf{n} = \{1,2,\dotsc,2 k\}\). Let our equivalence classes be
\[\{1,7,15\},\{2,16\},\{3,4,5,11\},\{6,10,12,14\},\{8\},\{9,13\}.\] Then
\(s_1=s_2=s_3=s_6=s_8=s_9=*\) and \(s_4=s_5=0\). The vertices of degree
\(3\) are \([1]\) and \([6]\); all other vertices are of degree \(2\).
Hence, \(s_{16}=\cdot\) (since \(16\) follows \(15\), which is in
\([1]\)) and \(s_7=s_{11}=s_{13}=s_{15}=\cdot\) (since these indices
follow \(6, 10, 12, 14\), which are in \([6]\)). Since
\(3\sim 4\sim 5,\) we let \(s_4 = s_5 = 0\). It remains to consider
\(i=10,12,14\). In the case \(i=10\), we see that \([9]\) has degree
\(2\), but, when we come to \(10\), we realize that no element of
\([10]\) has been seen next to an element of \([9]\) before: \(8\) is
next to \(9\), but \(8\notin [10]\). Hence, we let \(s_{10}=\cdot\). In
the case \(i=12\), we see that \([12]\) has been seen next to \([11]\)
before: \(5\in [11]\) and \(6\in [12]\). Since \([12]\) was the second
equivalence class other than \([11]\) to appear next to \([11]\) (the
first one was \([2]\): \(2\in [2]\), \(3\in [11]\)), we write
\(s_{12}=2\). The situation for \(i=14\) is analogous, in that \([14]\)
appeared next to \([13]\) before: \(9\in [13]\), \(10\in [14]\), and so,
since \(8\notin [13],[14]\), \(s_{14}=2\).

In summary,
\[\vec{s} = \text{***00*$\cdot$**$\cdot\cdot$2$\cdot$2$\cdot\cdot$},\]
and, in addition to writing \(\vec{s}\), we specify the equivalence
classes of the indices \(i\) with \(s_i=.\) explicitly (\([1]\) for
\(i=7,15\), \([6]\) for \(i=10\), \([3]\) for \(i=11\), \([9]\) for
\(i=13\), \([2]\) for \(i=16\)).

A reader can now reconstruct our equivalence classes by reading
\(\vec{s}\) from left to right, given that additional information. (Try
it!) We should now count the number of dots \(\cdot\), since that equals
the number of times we have to give additional information. For a class
\([i']\) that is a vertex of degree \(\leq 2\), it can happen at most
once (that is, for at most one element \(i'\) of \([i']\)) that
\(s_i\ne 0,1,2\) for the index \(i\) in \(\mathbf{n}\) right after
\(i'\), unless \(1\in [i']\), in which case it can happen twice.
(Someone who already has a neighbor and will end up with \(\leq 2\)
neighbors in total can meet a new neighbor at most once.) For \([i']\) a
vertex of arbitrary degree, it can happen at most \(\kappa\) times that
\(s_i\ne 0\). Hence, writing \(n_{\leq 2}\) for the number of vertices
of degree \(\leq 2\) and \(n_{\geq 3}\) for the number of vertices of
degree \(\geq 3\), we see that the total number of indices
\(i\in \mathbf{n}\) with \(s_i\in \{*,.\}\) is at most
\(\kappa n_{\geq 3}+ n_{\leq 2} + 1 + 1\), where the last \(+1\) comes
from the first index \(i\) in \(\mathbf{n}\). The number of indices
\(i\) with \(s_i=*\) equals the number of classes, i.e.,
\(n_{\leq 2} + n_{\geq 3}\). Hence, the number of indices \(i\) with
\(s_i = .\) is
\[\leq \kappa n_{\geq 3}+ n_{\leq 2} + 2 - (n_{\geq 3} + n_{\leq 2}) = (\kappa-1) n_{\geq 3} + 2 \leq (\kappa-1) \nu + 2.
\]

Each equivalence class contributes a factor of at most
\(\mathscr{L} = \sum_{p\in \mathbf{P}} \frac{1}{p}\) to our total in
\eqref{eq:littlestar}; singletons (equivalence classes with
one element each) actually contribute \(\sqrt{\mathscr{L}}\), because of
the factor of \(\mathscr{L}^{-\frac{|\mathcal{S}(\sim)|}{2}}\). Recall
that we are saving a factor of almost
\(H_0^{\frac{\nu}{12 \kappa} - 1}\) through \eqref{eq:circ} (let us say
\(H_0^{\nu/24 \kappa}\), to be safe). Thus, forgetting for a moment
about the yellow equivalence classes, we conclude that the contribution
to \eqref{eq:circ} of the equivalence relations \(\sim\) such that
\(G_\sim\) has \(\nu\) vertices of degree \(\geq 3\) is
\[\ll 4^{2 k} 5^{|\mathbf{n}|} (2 k)^2 \left(\frac{(2 k)^{\kappa-1}}{H_0^{1/24 \kappa}}\right)^\nu \mathscr{L}^k,\]
where the factor of \(4^{2 k}\) is there because we also have to specify
\(\vec{\sigma}\in \{-1,1\}^{\{1,\dotsc,2 k\}}\) and
\(\mathbf{l}, \mathbf{n}\subset \{1,2,\dotsc, 2k\}\). Provided that we
set our parameters so that \(H_0^{1/24 \kappa}\geq 2 (2 k)^{\kappa-1}\)
(and it turns out that we may do so, provided that \(\log H_0\) is
larger than \((\log H)^{2/3+\epsilon}\) -- or rather, larger than
\((\log H)^{1/2+\epsilon}\), if we make the improvement through multiple
arrows we mentioned a little while ago), we are done; we have a bound of
size \[\ll \mathscr{L}^k \sum_{\nu=1}^\infty 2^{-\nu}
\ll \mathscr{L}^k,\] which is what we wanted all along.

But wait! What about the part of the word that disappears during
reduction? It is partly described by a string of matched parentheses:
for example, \(x x^{-1} x^{-1} y y^{-1} x\) gives us \(()(())\). (We
also have to specify the exponents \(\sigma_i\) separately.) The
equivalence class of the index of a closing parenthesis is the same as
that of the index of the matching opening parenthesis. Thus, we need
only worry about specifying the equivalence classes of the opening
parentheses. There are \(k-|\mathbf{n}|/2\) of them.

A naive approach would be to describe each such equivalence class
\([i]\) by specifying the first index \(i\) in it each time it occurs
(except for the first time). The cost of that approach could be about as
large as \(k^{k-|\mathbf{n}|/2}\), which is much too large. It would
seem we are in a pickle. Indeed, we know we would have to be in a
pickle, if we were not using the fact that we are not working in all of
\(\mathbf{N}\), but in a subset \(X\subset \mathbf{N}\) all of whose
elements have \(\leq K \mathscr{L}\) divisors in \(\mathbf{P}\). (If we
worked in all of \(\mathbf{N}\), even trivial walks, which are entirely
yellow, would pose an insurmountable problem.) However, how can we use
\(X\), or the bound \(\leq K \mathscr{L}\), by this point?

The point is that we need not consider all possible \((p_{[i]})\) in
\eqref{eq:littlestar}, but only those tuples that can possibly arise in a walk
\[n, n+\sigma_1 p_1, n+\sigma_1 p_1 + \sigma_2 p_2,\dotsc,
n+\sigma_1 p_1 + \sigma_2 p_2 + \dotsb + \sigma_{2 k} p_{2 k}=n
\] all of whose nodes are in \(X\). Now, if a prime \(p_j\) has appeared
before as \(p_i\) (i.e., \(i<j\) and \(i\sim j\)) and both \(i\) and
\(j\) are ``lit'', that is \(i,j\in \mathbf{l}\), then, as we know,
\(\sigma_{i} p_{i} + \dotsc + \sigma_{j-1} p_{j-1}\) must be divisible
by \(p_i\). (Indices that are not lit do not pose a problem, due to the
factors of the form \(1/p\) that they contribute.) What is more: if
\(i\in \mathbf{l}\), \(i<j\) with
\(p_i|\sigma_{i} p_{i} + \dotsc + \sigma_{j-1} p_{j-1}\), then
\(n + \sigma_1 p_1 + \dotsc + \sigma_{j-1} p_{j-1}\) is \emph{forced} to
be divisible by \(p_i\) (because
\(n+\sigma_1 p_1 + \dotsc + \sigma_{i-1} p_{i-1}\) is divisible by
\(p_i\)). Now, \(n+\sigma_1 p_1 + \dotsc + \sigma_{j-1} p_{j-1}\) has
\(\leq K \mathscr{L}\) divisors. Hence, given \(j\), there are at most
\(K \mathscr{L}\) distinct equivalence classes \([i]\) having at least
one representative \(i<j\), \(i\in \mathbf{l}\) such that
\(p_i|\sigma_{i} p_{i} + \dotsc + \sigma_{j-1} p_{j-1}\) . This is a
property where \(n\) no longer appears.

Now, as we describe \(\sim\) to our reader, when we come to an index of
the one kind that remains problematic -- disappearing in the reduction,
corresponding to an open parenthesis, in an equivalence class that has
been seen before -- we need only specify an equivalence class
\emph{among those \(\leq K \mathscr{L}\) equivalence classes that have
at least one representative \(i<j\), \(i\in \mathbf{l}\) such that}
\(p_i|\sigma_{i} p_{i} + \dotsc + \sigma_{j-1} p_{j-1}\). The reader can
figure out which one those are, as that is a property given solely by
\(p_1,\dotsc,p_{j-1}\) and \(\sigma_1,\dotsc,\sigma_{j-1}\). We can give
them numbers \(1\) to \(\lfloor K \mathscr{L}\rfloor\) by order of first
appearance, and communicate to the reader the equivalence class we want
by its number, rather than by an index. Thus we incur only in a factor
of \(K\mathscr{L}\), not \(2 k\).

In the end, we obtain a total contribution of \[O((K\mathscr{L})^k),\]
which is what we wanted. In other words,
\[\textrm{Tr} (A|_X)^{2 k} \leq O(K \mathscr{L})^{k} N,\] Q.E.D.

Incidentally, in earlier drafts of the paper, we did not have a
``writer'' and a ``reader'', but a mahout and an elephant:

\begin{figure}
\centering
\includegraphics[scale=0.45]{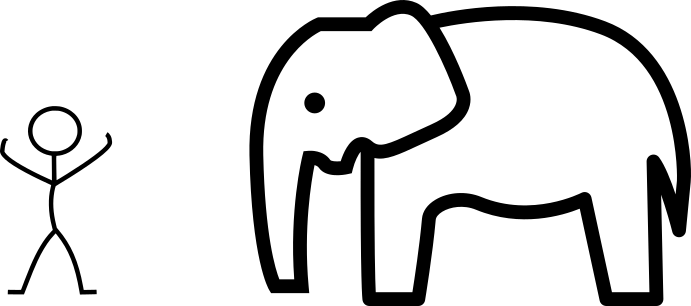}
\end{figure}

They were unfortunately censored by my coauthor. As this is my
exposition, here they are. The picture might be clearer now -- the
elephant-reader has no idea of \(n\), or of our grand strategy, but it
is an intelligent animal that can follow instructions and is endowed
with a flawless memory (and the ability to test for divisibility,
apparently).

\section{Conclusions}

\begin{main}Let the operator \(A\) be as before, with
\(\mathbf{N}=\{N+1,\dotsc,2 N\}\) and \(H_0,H,N\geq 1\) such that
\(H_0\leq H\) and \(\log H_0 \geq (\log H)^{1/2} (\log \log H)^{2}\).
Let \(\mathbf{P}\subset [H_0,H]\) be a set of primes such that
\(\mathscr{L} = \sum_{p\in \mathbf{P}} 1/p \geq e\) and
\(\log H \leq \sqrt{\frac{\log N}{\mathscr{L}}}\).

Then, for any \(1\leq K\leq \frac{\log N}{\mathscr{L} (\log H)^2}\),
there is a subset \(\mathscr{X}\subset \mathbf{N}\) with
\(|\mathbf{N}\setminus \mathscr{X}|\ll N e^{-K \mathscr{L} \log K} + N/\sqrt{H_0}\)
such that every eigenvalue of \(A|_{\mathscr{X}}\) is
\[O\left(\sqrt{K \mathscr{L}}\right),\] where the implied constants are
absolute. \end{main}

We have sketched a full proof, leaving out one, or rather two, passages
-- namely, the proof that we can take out from \(X\) two kinds of
integers, and still keep \(X\) well-distributed enough in arithmetic
progressions for cancellation to happen when we have too many lone
primes. As we have said before, those two kinds of integers are: (a)
integers \(n\) with \(\geq K \mathscr{L}\) divisors, (b) integers \(n\)
that could give rise to too many disjoint revenants. Here (b) sounds a
little vague, but, if we simply take out from \(X\) the set \(Y_\ell\)
of those integers \(n\) for which there can be a ``premature revenant'',
meaning that there exist \(p\in \mathbf{P}\),
\(p_1,\dotsc,p_l\in \mathbf{P}\) with \(p_i\ne p\) and
\(\sigma\in \{-1,1\}^l\), \(l\leq \ell\), such that
\[p|n, p_{1}|n, p_{2} | n + \sigma_1 p_1, \dotsc, p_l|n+\sigma_1 p_1 + \dotsc + \sigma_{l-1} p_{l-1}, p|n+\sigma_1 p_1 + \dotsc \sigma_l p_l,\]
then we have ensured that there cannot be more than \(2k/\ell\) disjoint
revenants. (We have not really forgotten about the possibility that some
intermediary indices may not be lit -- those are taken care of by a
different argument.) It is actually not hard to show that \(Y_{\ell}\)
is a fairly small set; what takes work is showing that it is
well-distributed. What we did was develop a new tool -- a combinatorial
sieve for conditions involving composite moduli. While it is somewhat
technical, may be interesting in that it will probably be useful for
attacking other problems. Let us leave it to the appendix.

The main theorem has several immediate corollaries. First of all, we
obtain what we set as our original goal.

\begin{cor} For any \(e<w\leq x\) such that \(w\to\infty\) as
\(x\to \infty\),
\[\frac{1}{\log w} \sum_{\frac{x}{w}\leq n\leq x} \frac{\lambda(n) \lambda(n+1)}{n} = O\left(\frac{1}{\sqrt{\log \log w}}\right).\]
\end{cor}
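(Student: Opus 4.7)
My plan is to deduce the corollary from the Main Theorem by reapplying to the truncated sum $W_w := \sum_{x/w \leq n\leq x} \lambda(n)\lambda(n+1)/n$ the same chain of reductions used in the Introduction for the full sum. The multiplicativity manipulation gives
\[
W_w \mathscr{L} = \sum_{p\in \mathbf{P}} \sum_{\substack{x/w\leq n\leq x\\ p|n}} \frac{\lambda(n)\lambda(n+p)}{n} + O(\log H)
\]
for any set $\mathbf{P} \subset [H_0, H]$ of primes with $\mathscr{L} = \sum_{p\in \mathbf{P}} 1/p$. A dyadic decomposition of $[x/w,x]$ splits the double sum into $O(\log w)$ pieces, one per window $(N,2N]$.

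On each dyadic window I would apply the Main Theorem with $K=O(1)$, obtaining an exceptional set $\mathscr{X}\subset (N,2N]$ of small complement on which every eigenvalue of $A|_\mathscr{X}$ is $O(\sqrt{\mathscr{L}})$. Combined with the Matom\"aki--Radziwi\l{}\l{} bound $\langle \lambda, \textrm{Ad}_{\Gamma'} \lambda\rangle = o(\mathscr{L})$ from \cite{MR3569059}, this yields $|\langle \lambda, \textrm{Ad}_{\Gamma} \lambda\rangle_N| \ll \sqrt{\mathscr{L}}$, so the contribution of the window to the double sum is $O(\sqrt{\mathscr{L}}) + O(H/N)$; the $\mathbf{N}\setminus\mathscr{X}$ piece is absorbed using the explicit size bound $|\mathbf{N}\setminus\mathscr{X}|\ll Ne^{-K\mathscr{L}\log K} + N/\sqrt{H_0}$ and $|\lambda|_\infty \leq 1$. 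Summing over the $O(\log w)$ windows,
\[
|W_w|\mathscr{L} \ll \log w\cdot \sqrt{\mathscr{L}} + Hw/x + \log H,
\]
so $|W_w|/\log w \ll 1/\sqrt{\mathscr{L}}$ once the error terms are made negligible.

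The parameter choice is the core of the argument. I would take $\mathbf{P}$ to be the primes in $[H_0, H]$, so $\mathscr{L}\approx \log\log H - \log\log H_0$ by Mertens; to get $\mathscr{L}\asymp \log\log w$ one needs $\log H$ polynomial in $\log w$. The binding constraint is $\log H\leq \sqrt{\log N/\mathscr{L}}$ at the smallest dyadic scale $N\asymp x/w$. A direct optimization, coupling this with $\log H_0\geq (\log H)^{1/2}(\log\log H)^2$, shows that the best achievable $\mathscr{L}$ in this setup is $\asymp \log\log(x/w)$, giving the bound $|W_w|/\log w \ll 1/\sqrt{\log\log(x/w)}$. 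This is $\leq O(1/\sqrt{\log\log w})$ precisely when $\log(x/w)\geq \log w$, i.e.\ $w\leq \sqrt{x}$.

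The main obstacle is the complementary regime $w > \sqrt{x}$. Here I would instead deduce the corollary from \eqref{eq:olivol} via the splitting $W_w = W_x - W_{x/w}$. Since $w > \sqrt{x}$ forces $\log x < 2\log w$, so that $\log\log x\leq \log\log w + O(1)$, one obtains $|W_x|/\log w \ll (\log x/\log w)/\sqrt{\log\log x}\ll 1/\sqrt{\log\log w}$ directly from \eqref{eq:olivol}. For $W_{x/w}$ two sub-cases arise: either $x/w\to\infty$, in which case \eqref{eq:olivol} again applies and the product $(\log(x/w)/\log w)\sqrt{\log\log w/\log\log(x/w)}$ is easily checked to be $O(1)$ throughout this regime (using $\log(x/w)\leq \log w$), or $x/w$ is bounded and $|W_{x/w}|\ll 1$ trivially. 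Together the two regimes cover all $w\in (e, x]$ tending to infinity, yielding the stated corollary.
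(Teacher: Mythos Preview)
Your argument is correct and is precisely the fleshing-out the paper leaves implicit: the corollary is stated as ``immediate'' after the Main Theorem, with the deduction of \eqref{eq:olivol} from \eqref{eq:feast} already sketched in \S1--\S3, and no separate proof is given. Your reduction (multiplicativity trick, dyadic split, $\mathrm{Ad}_\Gamma = A + \mathrm{Ad}_{\Gamma'}$, Main Theorem plus Matom\"aki--Radziwi\l\l) is exactly the paper's route, and your parameter analysis is right: the constraint $\log H \le \sqrt{\log N/\mathscr{L}}$ at the bottom scale $N\asymp x/w$, together with $\log H_0 \ge (\log H)^{1/2}(\log\log H)^2$, indeed caps $\mathscr{L}$ at a constant times $\log\log(x/w)$.

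Two minor remarks. First, the dichotomy ``either $x/w\to\infty$ or $x/w$ is bounded'' is not literally exhaustive, but your bound is uniform: for $x/w$ below any fixed threshold $C_0$ the trivial estimate $|W(x/w)|\ll 1$ suffices, and for $x/w>C_0$ (with $C_0$ large enough that $\log(x/w)>\sqrt{e}$) the monotonicity of $t\mapsto t/\sqrt{\log t}$ gives $(\log(x/w)/\log w)\sqrt{\log\log w/\log\log(x/w)}\le 1$. So the case split works as written. Second, one can avoid the split $w\lessgtr\sqrt{x}$ altogether by bounding the contribution of scales $N\le N_0$ trivially (it costs $\ll \log(N_0 w/x)$ in $|W_w|$) and choosing $N_0=(x/w)\cdot w^{1/\sqrt{\log\log w}}$; then the Main Theorem applies from $N_0$ upward with $\mathscr{L}\asymp\log\log w$ directly. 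Either route is fine.
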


We can also obtain substantially sharper results. A case in point: we can
prove that \(\lambda(n+1)\) averages to zero (with weight \(1/n\) as
above, or ``at almost all scales'') over integers \(\leq N\) having
exactly \(k\) prime factors, where \(k\) is a popular number of prime
factors to have (e.g., \(\lfloor \log \log N\rfloor\), or
\(\lfloor \log \log N\rfloor + 2021\)). To see more such corollaries,
look at the actual paper, or derive your own!

\subsection{Subset of acknowledgments. Bonus track}

I am grateful to many people -- please read the full acknowledgments in
the paper. Here I would like to thank two subsets in particular -- (a)
postdocs and students in Göttingen who patiently attended my online
lectures during the first year of the COVID pandemic, as the proof was
finally gelling, (b) inhabitants of MathOverflow. In (b), one can find,
for example, Fedor Petrov, who pointed us towards Kleitman-West,
besides answering other questions, but you can also find some users who
chose to remain anonymous. Among them was user ``BS.'', who explained
how one of my question about ranks was related to topology. That
relation has gone well under the surface in the current version, so let
us discuss it here, for our own edification.

Consider a word \(w\) of a special kind -- a word \(w\) where every
letter \(x_1,\dotsc,x_k\) appears twice, once as \(x_i\), once as
\(x_i^{-1}\). For \(1\leq i,j\leq k\), let \(m_{i,j}\) equal \(1\) if
either (a) \(x_i\) appears before \(x_i^{-1}\) , and \(x_j\) appears
between them, but \(x_j^{-1}\) does not appear between them, or (b)
\(x_i^{-1}\) appears before \(x_i\), and \(x_j^{-1}\) appears between
them, but \(x_j\) does not. Let \(m_{i,j}=-1\) if either (a) or (b) is
true with \(x_j\) and \(x_j^{-1}\) switched. Let \(m_{i,j}=0\)
otherwise. Then the \(k\)-by-\(k\) matrix \(M=(m_{i,j})\) is
skew-symmetric. As people in MathOverflow kindly showed me (apparently
my education in linear algebra left something to be desired\ldots{}), if
a skew-symmetric matrix \(M\) has rank \(r\), then it has a minor with
disjoint row and column index sets and rank \(\geq r/2\). Since I was
interested precisely in constructing such a minor with high rank (\(I\)
and \(J\) giving us what we called ``blue'' and ``red'' vertices in the
above), it made sense that I would want to know what the rank \(r\) of
\(M\) might be. In particular, when is \(M\) non-singular?

What BS. showed to me is that one can construct a surface \(S\) with
handles corresponding to the word \(w\) in a natural way. (Apparently
this construction is standard, but it was completely unknown to me.) For
instance, for \(w = x_1 x_2 x_1^{-1} x_2^{-1} x_3 x_3^{-1}\), the
surface \(S\) looks as follows:

\begin{center}
\begin{tikzpicture}[scale=0.75]
 \colorlet{lightblue}{blue!20!white}
 \draw[thick, fill=lightblue] (2,-1) -- (2,3.2) .. controls (2,4) and (2.2,4.8) .. (3,5) .. controls (3.8,5.2) and (5.2,5.2) .. (6,5) .. controls (6.8,4.8) and (7,4) .. (7,3.2) -- (7,-1) -- (6,-1) -- (6,2) .. controls (6,3.2) and (5.8,3.8) .. (5.4,4) .. controls (5,4.2) and (4,4.2) .. (3.6,4) .. controls (3.2,3.8) and (3,3.2) .. (3,0);
 \draw[thick, fill=lightblue] (8,-1) -- (8,0) .. controls (8,2) and (8.2,2.8) .. (9,3) .. controls (9.5,3.05) .. (10,3) .. controls (10.8,2.8) and (11,2) .. (11,0) -- (11,-1) -- (10,-1) -- (10,0) .. controls (10,1.2) and (9.9,1.8) .. (9.7,2) .. controls (9.55,2.1) and (9.45,2.1) ..(9.3,2) .. controls (9.1,1.8) and (9,1.2) .. (9,0) -- (9,-0.5) -- (8,-1);
 \draw[thick, fill=lightblue] (0,-1) -- (0,0) .. controls (0,2) and (0.2,2.8) .. (1,3) .. controls (2,3.2) and (3,3.2) .. (4,3) .. controls (4.8,2.8) and (5,2) .. (5,0) -- (5,-1) -- (4,-1) -- (4,-0.5) -- (4,0) .. controls (4,1.2) and (3.8,1.8) .. (3.4,2) .. controls (3,2.2) and (2,2.2) .. (1.6,2) .. controls (1.2,1.8) and (1,1.2) .. (1,0);
 \draw[fill=lightblue] (0,0) -- (0,-1) .. controls (0,-1.75) and (0.25,-2) .. (1,-2) -- (10,-2) .. controls (10.75,-2) and (11,-1.75) .. (11,-1) -- (11,0); 
 \draw[thick] (1,0) -- (2,0);
 \draw[thick] (3,0) -- (4,0);
 \draw[thick] (5,0) -- (6,0);
 \draw[thick] (7,0) -- (8,0);
 \draw[thick] (9,0) -- (10,0);
\end{tikzpicture}
\end{center}

The matrix \(M\) then corresponds to the intersection form of this
surface. This form is defined as an antisymmetric inner product on
\(H_1(S,\mathbb{Z})\), counting the number of intersections (with
orientation) of two closed paths in the way you may expect. For
instance, in the following, \(\langle z_1,z_2\rangle=-1\), whereas
\(\langle z_1,z_3\rangle = \langle z_2,z_3\rangle = 0\):

\begin{center}
\begin{tikzpicture}[scale=0.75]
   \colorlet{lightblue}{blue!20!white}
  \draw[thick, fill=lightblue] (2,-1) -- (2,3.2) .. controls (2,4) and (2.2,4.8) .. (3,5) .. controls (3.8,5.2) and (5.2,5.2) .. (6,5) .. controls (6.8,4.8) and (7,4) .. (7,3.2) -- (7,-1) -- (6,-1) -- (6,2) .. controls (6,3.2) and (5.8,3.8) .. (5.4,4) .. controls (5,4.2) and (4,4.2) .. (3.6,4) .. controls (3.2,3.8) and (3,3.2) .. (3,0);
  \draw[thick,->,color=violet] (6.5,0.5) -- (6.5,2.5);
  \draw[thick,->,color=violet] (6.5,2.5) .. controls (6.5,3.7) and (6.3,4.3) .. (5.9,4.5);
  \draw[thick,->,color=violet] (5.9,4.5) .. controls (5.5,4.7) and (3.5,4.7) .. (3.1,4.5);
  \draw[thick,color=violet] (3.1,4.5) .. controls (2.7,4.3) and (2.5,3.7) .. (2.5,2.5);
  \draw[thick,color=violet] (2.5,2.5) -- (2.5,0.5);
  \draw[thick, fill=lightblue] (8,-1) -- (8,0) .. controls (8,2) and (8.2,2.8) .. (9,3) .. controls (9.5,3.05) .. (10,3) .. controls (10.8,2.8) and (11,2) .. (11,0) -- (11,-1) -- (10,-1) -- (10,0) .. controls (10,1.2) and (9.9,1.8) .. (9.7,2) .. controls (9.55,2.1) and (9.45,2.1) ..(9.3,2) .. controls (9.1,1.8) and (9,1.2) .. (9,0) -- (9,-0.5) -- (8,-1);
  \draw[thick, fill=lightblue] (0,-1) -- (0,0) .. controls (0,2) and (0.2,2.8) .. (1,3) .. controls (2,3.2) and (3,3.2) .. (4,3) .. controls (4.8,2.8) and (5,2) .. (5,0) -- (5,-1) -- (4,-1) -- (4,-0.5) -- (4,0) .. controls (4,1.2) and (3.8,1.8) .. (3.4,2) .. controls (3,2.2) and (2,2.2) .. (1.6,2) .. controls (1.2,1.8) and (1,1.2) .. (1,0);
  \draw[fill=lightblue] (0,0) -- (0,-1) .. controls (0,-1.75) and (0.25,-2) .. (1,-2) -- (10,-2) .. controls (10.75,-2) and (11,-1.75) .. (11,-1) -- (11,0);  
  \draw[thick] (1,0) -- (2,0);
  \draw[thick] (3,0) -- (4,0);
  \draw[thick] (5,0) -- (6,0);
  \draw[thick] (7,0) -- (8,0);
  \draw[thick] (9,0) -- (10,0);
  \draw[thick,->,color=violet] (0.9,-0.5) -- (3,-0.5);
  \draw[thick,color=violet] (3,-0.5) -- (4.1,-0.5) .. controls (4.5,-0.4) .. (4.5,0.5);
  \draw[thick,color=violet] (0.9,-0.5) .. controls (0.5,-0.4) .. (0.5,0.5);
  \draw[thick,->,color=violet] (4.5,0.5) .. controls (4.5,1.7) and (4.3,2.3) .. (3.9,2.5) node[below] {$z_1$};
  \draw[thick,->,color=violet] (3.9,2.5) .. controls (3.5,2.7) and (1.5,2.7) .. (1.1,2.5);
  \draw[thick,color=violet] (1.1,2.5) .. controls (0.7,2.3) and (0.5,1.7) .. (0.5,0.5);
  \draw[thick,->,color=violet] (2.9,-1) -- (5,-1);
  \draw[thick,color=violet] (5,-1) node[below] {$z_2$} -- (6.1,-1) .. controls (6.5,-0.9) .. (6.5,0.5);
  \draw[thick,color=violet] (2.9,-1) .. controls (2.5,-0.9) .. (2.5,1);
  \draw[thick,->,color=violet] (8.9,-0.5) -- (9,-0.5) node[below] {$z_3$};
  \draw[thick,color=violet] (9,-0.5) -- (10.1,-0.5) .. controls (10.5,-0.4) .. (10.5,0.5);
  \draw[thick,color=violet] (8.9,-0.5) .. controls (8.5,-0.4) .. (8.5,0.5);
  \draw[thick,->,color=violet] (10.5,0.5) .. controls (10.5,1.7) and (10.3,2.3) .. (9.9,2.5);
  \draw[thick,color=violet] (9.9,2.5) .. controls (9.5,2.6) .. (9.1,2.5);
  \draw[thick,color=violet] (9.1,2.5) .. controls (8.7,2.3) and (8.5,1.7) .. (8.5,0.5);
  \end{tikzpicture}
\end{center}

Say \(S\) has genus \(g\) and \(b\geq 1\) boundary components. Then, for
\(S_g\) the surface of genus \(g\) without boundary, there is an
embedding \(S\hookrightarrow S_g\) preserving the intersection form,
with \(H^1(S)\to H^1(S_g)\) having kernel of rank \(b-1\). The
intersection form on \(H^1(S_g)\) is non-singular. Hence, \(M\) has
corank \(b-1\). In particular, \(M\) is non-singular iff \(b=1\), i.e.,
iff its boundary is connected.

It is an exercise to show that \(b\) equals the number of cycles in the
permutation \(i\mapsto \sigma(i)+1 \bmod 2 k\), where \(\sigma\) is the
permutation of \(\{1,2,\dotsc,2 k\}\) switching \(x_i\) and \(x_i^{-1}\)
in \(w\) for every \(1\leq i\leq k\).

I have no idea of how to define a surface \(S\) like the above for a
word \(w\) of general form -- the natural generalization of \(M\) is the
matrix corresponding to the system 
\eqref{eq:ddagger} of divisibility relations, and that matrix need not be skew-symmetric, or even
square. However, in \(S\) and its boundary, you can already see shades
of our graph \(\mathscr{G}_{(\sigma,\sim)}\).

\appendix
\section{Sieves}

What we must address now may be seen as a technical task. However, the
way we will address it most likely has more general applicability.

Our task in this appendix is to show how to exclude from our set
\(X\subset \mathbf{N}\) all integers \(n\) that could give rise to
\emph{premature revenants}, that is, edge lengths \(p_i=p_i'\) with
\(i'-i\) small such that \(p_j\ne p_i\) for some \(i<j<i'\). (Without
this last condition, we would be counting not only ``revenants'' but
also mere repetitions.) As we already commented, it is enough to exclude
the set \(Y_\ell\) of all integers \(n\) such that there exist
\(p\in \mathbf{P}\), \(p_1,\dotsc,p_l\in \mathbf{P}\) with \(p_i\ne p\)
and \(\sigma\in \{-1,1\}^l\), \(l\leq \ell\), for which
\begin{equation}\label{eq:dagger}\begin{aligned}
p|n, p_{1}&|n, p_{2} | n + \sigma_1 p_1, \dotsc, \\p_l&|n+\sigma_1 p_1 + \dotsc + \sigma_{l-1} p_{l-1}, p|n+\sigma_1 p_1 + \dotsc \sigma_l p_l.\end{aligned}\end{equation}

It is actually easy (and in fact an exercise for the reader) to show
that \(Y_{\ell}\) is quite small -- not much larger than
\(O(\mathscr{L})^{\ell} N/H_0\). (Outline: the probability that a given
divisor \(p\) of \(\sigma_1 p_1 + \dotsc + \sigma_l p_l\) divide a
random \(n\) is about \(1/p\), which is at most \(1/H_0\); the
probability that there be \(p_1,\dotsc,p_l\) as above with
\(\sigma_1 p_1 + \dotsc + \sigma_l p_l=0\) is also quite small.) The more
complicated task is to show that \(Y_{\ell}\) is reasonably
equidistributed in arithmetic progressions.

The main issue here is that we have a great number of conditions as in \eqref{eq:dagger}
to exclude. Inclusion-exclusion involves \(2^m\) terms for
\(m\) conditions -- that is too many. There is a tool for dealing with
that sort of issue in number theory, at least in some specific contexts:
sieves.

We shall first show how to set up a general, abstract combinatorial
sieve, for arbitrary logical conditions (rather than conditions of the
form \(n\equiv a \bmod p\)). We will then show how to apply it to
conditions of the form \(n\equiv a \bmod m\), that is, congruence
conditions where the moduli may be composite (as opposed to being prime,
as is common in sieve theory). The matter is tricky -- one has to prevent
combinatorial explosion again. Rota's cross-cut theorem will be our
friend.

Lastly, we will show how to apply the sieve in our context (with moduli
\(p p_1 \dotsb p_l\) coming from \eqref{eq:dagger} and sketch how to
estimate the main and error terms. We will introduce \emph{sieve
graphs}.

For readers who have had some passing contact with sieve theory: while,
in introductory texts on sieve theory, the emphasis is often on
\emph{counting} a set of elements \(S\) not obeying any of a set of
conditions (e.g., the set \(S\) of primes \(n\) such that \(n+2\) is
also a prime; its elements \(n\) do not fulfill the conditions
\(n\equiv 0 \bmod p\) or \(n\equiv -2 \bmod p\) for any small prime
\(p\)), the emphasis on much recent work, and also here, lies more
generally on providing an approximation to the characteristic function
\(1_S\) of \(S\) by a function that is easier to deal with, or, if you
wish, has a ``simpler description'' (in some precise sense). One label
that has become attached to this use of sieves is ``enveloping sieve'',
though that really describes one kind of approximation (a majorant of
\(1_S\)) and at any rate should really be called an enveloping
\emph{use} of a sieve (many sieves can be used as enveloping sieves). At
any rate, that is all more or less orthogonal to the main issue here,
which is that we have to develop a genuinely more general sieve.

\subsection{An abstract combinatorial sieve}

Let \(\mathbf{Q}\) be a set of conditions that an element \(x\) of a set
\(Z\) may or may not fulfill. (For us, later, \(Z\) will be the set of
integers, but that is of no importance at this point.) Denote by
\(\mathbf{Q}(x)\in 2^{\mathbf{Q}}\) the set
\(\{Q\in \mathbf{Q}: Q(x) \text{ is true}\}\), i.e., the set of
conditions in \(\mathbf{Q}\) fulfilled by \(x\). Define
\(1_\emptyset(S)\) to be \(1\) if the set \(S\) is empty, and \(0\)
otherwise. Then \(1_\emptyset(\mathbf{Q}(x))\) equals \(1\) when \(x\)
satisfies none of the conditions in \(\mathbf{Q}\), and \(0\) otherwise.

We are interested in approximations to \(1_\emptyset(\mathbf{Q}(x))\),
i.e., the function that takes the value \(1\) when \(x\) satisfies none
of the conditions in \(\mathbf{Q}\), and \(0\) otherwise. This may seem
to be a silly question, though it falls within the general framework we were
discussing before. Let us put matters a little differently. A standard
way to express \(1_\emptyset(\mathbf{Q}(x))\) would be as
\[1_\emptyset(\mathbf{Q}(x)) = \sum_{\mathbf{T}\subset \mathbf{Q}(x)} (-1)^{|\mathbf{T}|},\]
and that might suit us, except that the number of subsets
\(\mathbf{T}\subset \mathbf{Q}(x)\) is very large. Can we obtain a
reasonable approximation by means of a sum of the form
\[\sum_{\mathbf{T}\subset \mathbf{Q}(x)} g(\mathbf{T}) (-1)^{|\mathbf{T}|},\]
where \(g:2^\mathbf{Q}\to \{0,1\}\) is a function -- preferably one
whose support is much smaller than \(\mathbf{Q}(x)\)? (Here, as is
usual, \(2^\mathbf{Q}\) denotes the set of all subsets of
\(\mathbf{Q}\).)

It turns out to be possible to bound the error term in an approximation
of this form in full generality. To be precise: the error term will be
bounded in terms of the boundary of the support of \(g\). Here we say
that an \(\mathbf{S}\subset \mathbf{Q}\) is in the boundary of a
collection \(B\subset 2^\mathbf{Q}\) if there is an element \(s\) of
\(\mathbf{S}\) such that exactly one of the two sets \(\mathbf{S}\),
\(\mathbf{S}\setminus \{s\}\) is in \(B\).

\begin{lemma}
Let \(g:2^\mathbf{Q}\to \{0,1\}\). Assume
\(g(\emptyset)=1\). Choose a linear ordering for \(\mathbf{Q}\). Then
\[\begin{aligned}
1_\emptyset(\mathbf{Q}(x)) &= 
\sum_{\mathbf{T}\subset \mathbf{Q}(x)} g(\mathbf{T}) (-1)^{|\mathbf{T}|}\\
&+ \mathop{\sum_{\emptyset\ne \mathbf{S}\subset \mathbf{Q}(x)}}_{\min(\mathbf{Q}(x))\in \mathbf{S}} (-1)^{|\mathbf{S}|}
(g(\mathbf{S}\setminus \{\min(\mathbf{Q}(x))\})-g(\mathbf{S})).
\end{aligned}
\]
\end{lemma}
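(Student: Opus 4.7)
The plan is a short case analysis on whether $\mathbf{Q}(x)$ is empty, supplemented in the nonempty case by a sign-tracking involution that pairs subsets of $\mathbf{Q}(x)$ according to the presence or absence of $m := \min(\mathbf{Q}(x))$. First, I would dispose of the edge case $\mathbf{Q}(x)=\emptyset$: the left-hand side is then $1_\emptyset(\emptyset)=1$, the first sum on the right collapses to the single term $\mathbf{T}=\emptyset$ contributing $g(\emptyset)=1$ by hypothesis, and the second sum is vacuous (there is no $\min(\mathbf{Q}(x))$, and no nonempty subset of the empty set). So the identity reduces to $1=1$, and this is the only place the normalization $g(\emptyset)=1$ is used.

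Suppose now $\mathbf{Q}(x)\ne\emptyset$ and write $m = \min(\mathbf{Q}(x))$; then the left-hand side is $0$, so I must show the right-hand side vanishes. I would split the first sum according to whether $\mathbf{T}$ contains $m$. Reindexing the $m\in\mathbf{T}$ portion via $\mathbf{T}\mapsto \mathbf{T}\setminus\{m\}$, which absorbs one factor of $-1$, and combining with the $m\notin\mathbf{T}$ portion rewrites the first sum as
\[
\sum_{\mathbf{T}\subset\mathbf{Q}(x),\,m\notin\mathbf{T}} (-1)^{|\mathbf{T}|}\bigl(g(\mathbf{T}) - g(\mathbf{T}\cup\{m\})\bigr).
\]
The second sum on the right-hand side, after the identical substitution $\mathbf{S}\mapsto \mathbf{S}\setminus\{m\}$, becomes precisely the negative of this expression, the extra sign again coming from $|\mathbf{S}| = |\mathbf{S}\setminus\{m\}|+1$. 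The two therefore cancel and the right-hand side is $0$, matching the left-hand side.

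The main obstacle here is really nothing more than bookkeeping: keeping the signs straight under the reindexing, and taking care in the edge case $\mathbf{Q}(x)=\emptyset$ to remember that the first sum is \emph{not} empty — it contains the single term $\mathbf{T}=\emptyset$, which is exactly why the normalization $g(\emptyset)=1$ is imposed. The identity itself has no hidden content beyond this pairing; all of the interest lies downstream, in choosing $g$ so that the first sum is tractable and the second sum (which will be controlled by the boundary of the support of $g$, in the sense defined just before the lemma) is small.
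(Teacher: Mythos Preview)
Your proof is correct and is exactly the argument the paper has in mind: the paper remarks that the proof is ``short and basically trivial (for $\mathbf{Q}(x)$ non-empty, the second sum is just a reordering of the first sum, with opposite sign),'' which is precisely your pairing via $\mathbf{S}\leftrightarrow\mathbf{S}\setminus\{m\}$, together with the trivial check when $\mathbf{Q}(x)=\emptyset$.
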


The proof is short and basically trivial (for \(\mathbf{Q}(x)\)
non-empty, the second sum is just a reordering of the first sum, with
opposite sign). It is inspired by a passage in the proof of Brun's
combinatorial sieve (see, e.g., \cite[\S 6.2, p.~87-89]{zbMATH02239783}.
We do not need the linear ordering for \(\mathbf{Q}\) to be in any sense
natural.

\subsection{Sieving by composite moduli}

Let \(\mathscr{Q}\) be a finite collection of arithmetic progressions.
To each progression \(P\in \mathscr{Q}\), we can associate the condition
\(n\in P\), for \(n\in \mathbb{Z}\). Thus, we obtain a set
\(\mathbf{Q}\) of conditions corresponding to \(\mathscr{Q}\), and apply
the framework above.

We are interested in approximating
\(1_{n\in P \forall P\in \mathscr{Q}}(n)\) -- that is, the
characteristic function of the set of all \(n\) lying in no progression
\(P\in \mathscr{Q}\) -- by a sum
\[F_\mathfrak{D}(n) = \mathop{\sum_{\mathscr{S}\subset \mathscr{Q}}}_{\bigcap \mathscr{S} \in \mathfrak{D}} (-1)^{|\mathscr{S}|} 1_{n\in \bigcap \mathscr{S}},\]
where \(\mathfrak{D}\subset \mathscr{Q}^\cap\) is some set of
progressions.

We will denote by \(\mathfrak{q}(R)\) the modulus \(q\) of an
arithmetic progression \(a + q \mathbb{Z}\).

\begin{prop}Let \(\mathscr{Q}\) be a finite collection of
distinct arithmetic progressions in \(\mathbb{Z}\) with square-free
moduli. Let \(\mathfrak{D}\) be a non-empty subset of
\(\mathscr{Q}^\cap = \{\bigcap \mathscr{S}: \mathscr{S}\subset \mathscr{Q}\}\)
with \(\emptyset\not\in \mathfrak{D}\). Assume \(\mathfrak{D}\) is
closed under containment, i.e., if \(S\in \mathfrak{D}\), then every
superset \(S'\supset S\) in \(\mathscr{Q}^\cap\) is also in
\(\mathfrak{D}\). Let \(F_\mathfrak{D}\) be as above. Then
\[\begin{aligned} 1_{n\in P \forall P\in \mathscr{Q}}(n) &=
F_{\mathfrak{D}}(n) +
O^*\left(\sum_{R\in \partial \mathfrak{D}} 2^{\omega(\mathbf{q}(R))} 1_{n\in R}\right)\\
&= F_{\mathfrak{D}}(n) +
O^*\left(\sum_{R\in \partial_{\mathrm{out}} \mathfrak{D}} 3^{\omega(\mathbf{q}(R))} 1_{n\in R}\right)
\end{aligned}\] where \[\partial \mathfrak{D} = \{R\in \mathfrak{D}:
\exists P\in \mathscr{Q}\; \text{s.t.}\; P\cap R \not\in \mathfrak{D}\},\]
\[\partial_\mathrm{out} \mathfrak{D} = \{D\in \mathscr{Q}^\cap \setminus
\mathfrak{D} : \exists P\in \mathscr{Q}, R \in \mathfrak{D}\, \text{s.t.}\, D = P \cap R \}.\]
Moreover, we can write \(F_\mathfrak{D}(n)\) in the form
\[F_\mathfrak{D}(n) = \sum_{R\in \mathfrak{D}} c_R 1_{n\in R}\] with
\(|c_R|\leq 2^{\omega(\mathfrak{q}(R))}\).
\end{prop}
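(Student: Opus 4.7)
The plan is to apply the abstract combinatorial sieve Lemma with the function $g : 2^{\mathscr{Q}} \to \{0,1\}$ given by $g(\mathscr{S}) = 1_{\bigcap \mathscr{S} \in \mathfrak{D}}$, using the convention $\bigcap \emptyset = \mathbb{Z}$; this $\mathbb{Z}$ lies in $\mathfrak{D}$ because $\mathfrak{D}$ is nonempty and closed under containment. With this choice the main sum produced by the Lemma is exactly $F_{\mathfrak{D}}(n)$, so the Proposition reduces to estimating the error term
\[ E(n) = \sum_{\substack{\emptyset \neq \mathbf{S} \subset \mathscr{Q}(n) \\ \min(\mathscr{Q}(n)) \in \mathbf{S}}} (-1)^{|\mathbf{S}|}\, \bigl( g(\mathbf{S}\setminus\{\min\}) - g(\mathbf{S}) \bigr). \]

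The hypothesis that $\mathfrak{D}$ is closed under containment means exactly that $g$ is monotone non-increasing as elements are added to $\mathbf{S}$, so every jump $g(\mathbf{S}\setminus\{\min\}) - g(\mathbf{S})$ lies in $\{0,1\}$, and equals $1$ precisely when $R := \bigcap (\mathbf{S}\setminus\{\min\}) \in \mathfrak{D}$ while $R \cap P_{\min} = \bigcap \mathbf{S} \notin \mathfrak{D}$; such an $R$ is by definition in $\partial \mathfrak{D}$. Writing $\mathbf{S} = \mathbf{S}' \cup \{\min\}$, interchanging sums, and taking absolute values inside, I obtain
\[ |E(n)| \leq \sum_{\substack{R \in \partial \mathfrak{D}\\ n \in R}} \Biggl| \sum_{\substack{\mathbf{S}' \subset \mathscr{Q}(n)\setminus\{\min\}\\ \bigcap \mathbf{S}' = R}} (-1)^{|\mathbf{S}'|} \Biggr|. \]
The restriction $n \in R$ is automatic, since for any $\mathbf{S}'$ contributing to the inner sum, $n \in P$ for all $P \in \mathbf{S}'$, hence $n \in \bigcap \mathbf{S}' = R$.

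The heart of the argument is to bound the inner signed sum by $2^{\omega(\mathfrak{q}(R))}$. Set $q = \mathfrak{q}(R)$, enumerate its prime factors as $p_1, \dotsc, p_\omega$, and for each $i$ put $B_i = \{P \in \mathscr{Q}(n)\setminus\{\min\} : P \supset R,\; p_i \mid \mathfrak{q}(P)\}$. Because every admissible $P$ has $\mathfrak{q}(P) \mid q$ with congruence compatible with $R$, the constraint $\bigcap \mathbf{S}' = R$ is equivalent to $\mathbf{S}' \cap B_i \neq \emptyset$ for each $i$. Expanding $\prod_i (1 - 1_{\mathbf{S}' \cap B_i = \emptyset})$, exchanging the order of summation, and collapsing the inner alternating sum via the identity $\sum_{\mathbf{S}' \subset Y} (-1)^{|\mathbf{S}'|} = 1_{Y = \emptyset}$ gives
\[ \sum_{\substack{\mathbf{S}'\\ \bigcap \mathbf{S}' = R}} (-1)^{|\mathbf{S}'|} = \sum_{I \subset \{1,\dotsc,\omega\}} (-1)^{|I|} \, 1_{\mathscr{Q}_R(n) \,=\, \bigcup_{i \in I} B_i}, \]
where $\mathscr{Q}_R(n) = \{P \in \mathscr{Q}(n) \setminus \{\min\} : P \supset R\}$. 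This is an alternating sum of at most $2^\omega$ values in $\{0,1\}$, hence bounded by $2^\omega$ in absolute value; it is essentially Rota's cross-cut theorem applied to the lattice of intersections of subfamilies of $\mathscr{Q}_R$. The first error bound in the Proposition now follows.

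The second form, with $\partial_{\mathrm{out}}\mathfrak{D}$ and $3^{\omega(\mathfrak{q}(R))}$, comes from the same calculation by grouping the error sum according to $D = \bigcap \mathbf{S} \in \partial_{\mathrm{out}}\mathfrak{D}$ instead of $R$; each prime $p \mid \mathfrak{q}(D)$ now has three allowable states---it may divide $\mathfrak{q}(P_{\min})$ only, divide $\mathfrak{q}(\bigcap \mathbf{S}')$ only, or divide both---and an analogous inclusion--exclusion yields $3^\omega$ in place of $2^\omega$. Finally, the representation $F_{\mathfrak{D}}(n) = \sum_{R \in \mathfrak{D}} c_R \, 1_{n \in R}$ with $|c_R| \leq 2^{\omega(\mathfrak{q}(R))}$ is obtained by grouping the definition of $F_{\mathfrak{D}}$ according to $R = \bigcap \mathscr{S}$ and applying the same $2^\omega$ bound to each coefficient $c_R = \sum_{\mathscr{S}: \bigcap\mathscr{S} = R} (-1)^{|\mathscr{S}|}$. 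The main obstacle is the combinatorial collapse in the third paragraph; once that signed sum is expressed as an alternating $0/1$ sum via a cross-cut argument, everything else is routine reorganization dictated by the very shape of the Proposition.
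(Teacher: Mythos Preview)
Your approach is essentially the paper's: apply the abstract sieve lemma with $g(\mathscr{S})=1_{\bigcap\mathscr{S}\in\mathfrak{D}}$, then control both the error terms and the coefficients $c_R$ by collapsing $\sum_{\mathscr{S}:\bigcap\mathscr{S}=R}(-1)^{|\mathscr{S}|}$ via the cross-cut identity. Your inline inclusion--exclusion in the third paragraph is exactly a proof of the paper's auxiliary lemma $\bigl|\sum_{\mathscr{S}\subset\mathscr{C}:\,\bigcup\mathscr{S}=X}(-1)^{|\mathscr{S}|}\bigr|\le 2^{|X|}$, once one identifies each $P\supset R$ with the set of primes dividing $\mathfrak{q}(P)$ (this identification is injective because the moduli are square-free and any two progressions containing $R$ with the same modulus coincide).

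One point deserves a little more care: your ``three allowable states'' heuristic for the $3^{\omega}$ bound is not quite a proof, because when you group by $D=\bigcap\mathbf{S}$ the residual condition $\bigcap\mathbf{S}'\in\mathfrak{D}$ still depends on $\mathbf{S}'$ and cannot be factored out directly. The clean way is to derive the $\partial_{\mathrm{out}}$ bound from the $\partial$ bound you already proved: for each contributing $R$ one has $n\in P_{\min}$ and $D:=R\cap P_{\min}\in\partial_{\mathrm{out}}\mathfrak{D}$ with $n\in D$; then, for fixed $D$, summing $2^{\omega(\mathfrak{q}(R))}$ over the admissible $R$ (whose moduli must contain every prime of $\mathfrak{q}(D)/\mathfrak{q}(P_{\min})$ and may or may not contain each prime of $\mathfrak{q}(P_{\min})$) gives at most $2^{b}3^{a}\le 3^{\omega(\mathfrak{q}(D))}$. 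With that adjustment your argument is complete and matches the paper's.
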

We can of course think of \(\partial \mathfrak{D}\) and
\(\partial_{\mathrm{out}} \mathfrak{D}\) as the boundary and the outer
boundary of \(\mathfrak{D}\).

\begin{proof}The proof of the Proposition starts with an
application of the Lemma above. In what then follows, the important
thing is to prevent a combinatorial explosion. For instance, it is not a
priori clear that \(c_R\) can be bounded well: there could be very many
ways to express a given \(R\in \mathfrak{D}\) as an intersection
\(\bigcap \mathscr{S}\); in fact, the number of ways could be close to
\(2^{2^{\omega(\mathfrak{q}(R))}}\), that is, the number of collections
of subsets of a set with \(\mathfrak{q}(R)\) elements. We can give the
much better bound \(2^{\omega(\mathfrak{q}(R))}\) by obtaining
cancellation (by \((-1)^{|\mathscr{S}|}\)) among those different ways.
To be more precise, we apply the following Lemma, which is an easy
consequence of Rota's cross-cut theorem, but can also be proved from
scratch in a couple of lines. The same Lemma allows us to deal with the
same combinatorial explosion in the error terms.
\end{proof}

\begin{lemma} Let \(\mathscr{C}\) be a collection of subsets of a
finite set \(X\). Then
\[\left|\mathop{\sum_{\mathscr{S}\subset \mathscr{C}}}_{\bigcup \mathscr{S} = X} (-1)^{|\mathscr{S}|}\right|\leq 2^{|X|}.\]
\end{lemma}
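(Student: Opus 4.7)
The strategy is to dualize the global constraint $\bigcup\mathscr{S}=X$ by a second layer of inclusion--exclusion, converting the sum on the left into a sum indexed by subsets $Y\subset X$. The bound $2^{|X|}$ then comes simply from counting these subsets.

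First I would rewrite the cover condition. Since every $C\in\mathscr{C}$ is a subset of $X$, one has $\bigcup\mathscr{S}=X$ iff $X\setminus\bigcup\mathscr{S}=\emptyset$, and the elementary identity
\[
1_{Z=\emptyset}=\sum_{Y\subset Z}(-1)^{|Y|}
\]
applied with $Z=X\setminus\bigcup\mathscr{S}$ yields
\[
1_{\bigcup\mathscr{S}=X}\;=\;\sum_{\substack{Y\subset X\\ Y\cap C=\emptyset\;\forall C\in\mathscr{S}}}(-1)^{|Y|}.
\]

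Next I would substitute this into the left-hand side of the Lemma and swap the order of summation, putting $Y$ on the outside. For each fixed $Y\subset X$, the inner sum becomes
\[
\sum_{\mathscr{S}\subset \mathscr{C}_Y}(-1)^{|\mathscr{S}|},\qquad \mathscr{C}_Y:=\{C\in\mathscr{C}:C\cap Y=\emptyset\},
\]
which factors as $(1-1)^{|\mathscr{C}_Y|}$ and therefore vanishes unless $\mathscr{C}_Y=\emptyset$, i.e.\ unless $Y$ meets every $C\in\mathscr{C}$. What remains is the clean identity
\[
\mathop{\sum_{\mathscr{S}\subset\mathscr{C}}}_{\bigcup\mathscr{S}=X}(-1)^{|\mathscr{S}|}
\;=\;\sum_{\substack{Y\subset X\\ Y\cap C\ne\emptyset\;\forall C\in\mathscr{C}}}(-1)^{|Y|},
\]
and since each term on the right has absolute value $1$ while the index set sits inside $2^{X}$, the bound $2^{|X|}$ is immediate.

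There is no serious obstacle in this argument, and indeed the whole point is that almost all the content is in choosing the right reformulation. A natural first instinct --- attacking the left-hand side by a telescoping or recursion over $C\in\mathscr{C}$ --- fails because the constraint $\bigcup\mathscr{S}=X$ does not factor over $\mathscr{C}$, so no such recursion closes. The key move is the duality swap above, which trades the global cover condition for a condition that is separable over $\mathscr{C}$ once $Y$ is fixed; after that, everything trivializes. The resulting identity is of course the special case of Rota's cross-cut theorem alluded to in the paragraph preceding the Lemma, so in practice one could simply cite Rota instead of giving the three-line proof above.
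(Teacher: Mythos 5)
Your proof is correct, and it is precisely the ``couple of lines from scratch'' that the paper alludes to: the paper itself leaves the Lemma as an exercise, noting only that it follows from Rota's cross-cut theorem or can be proved directly. Your double inclusion--exclusion swap, reducing the left-hand side to $\sum_{Y}(-1)^{|Y|}$ over the blocking sets $Y\subset X$ (those meeting every $C\in\mathscr{C}$), is exactly the cross-cut identity, and the bound $2^{|X|}$ follows by counting terms.
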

\begin{proof}
 Exercise.
\end{proof}

How to apply the Proposition? We can define \(\mathfrak{D}\) to be the
set of progressions in \(\mathscr{Q}^\cap\) with ``small modulus'', for
some notion of ``small''. Then its boundary consists of progressions
that are ``borderline small'', i.e., not really small, and so the
proportion of \(n\) in each one of them will not be large; we just need
to control the size of the boundary to show that the total error term is
acceptable.

\subsection{Sieve graphs and their usage}

We now come to our application of the sieve we have just developed. Our
aim is to prevent our walks
\[n, n+\sigma_1 p_1, n+\sigma_1 p_1 + \sigma_2 p_2,\dotsc\] from having
what we called \emph{premature revenants}. We will do so by constraining
each of \(n, n+\sigma_1 p_1,n+\sigma_1 p_1 + \sigma_2 p_2\dotsc\) to lie
within the set \(Y_{\ell}\) of integers that \emph{cannot} give rise to
premature revenants.

To be precise: we define \(Y_{\ell}\) to be the set of all integers
\(n\) except for those for which there are primes
\(p_1,\dotsc,p_l\in \mathbf{P}\) and signs
\(\sigma_1,\dotsc,\sigma_l\in \{-1,1\}\) with \(1\leq l< \ell\) such
that
\begin{equation}\label{eq:etoile}p_1|n, p_2|n+\sigma_1 p_1,\dotsc,p_l|n+\sigma_1 p_1 + \dotsc + \sigma_{l-1} p_{l-1},\end{equation}
there are no repeated primes among \(p_1,\dotsc,p_l\) except perhaps for
consecutive primes \(p_i=p_{i+1} = \dotsc =p_j\) with
\(\sigma_i = \sigma_{i+1} = \dotsc = \sigma_j\), and one of the
following two conditions holds:
\begin{itemize}
\item there exists a prime
\(p_0\in \mathbf{P}\) distinct from \(p_1,\dotsc,p_l\) such that
\begin{equation}\label{eq:etoile1}p_0|n\;\text{and}\;p_0|n+\sigma_1 p_1 + \dotsc + \sigma_l p_l,\end{equation}
\item we have
\begin{equation}\label{eq:etoile2}
\sigma_1 p_1 + \dotsc + \sigma_l p_l= 0.\end{equation}
\end{itemize}

The set of integers \(n\) that obey conditions \eqref{eq:etoile}
and \eqref{eq:etoile1} is
an arithmetic progression to modulus \([p_0,p_1,\dotsc,p_l]\) (that is,
the lcm of \(p_1,\dotsc,p_l\), i.e., the product of all distinct primes
among them), unless it is empty. The set of integers \(n\) obeying
\eqref{eq:etoile} and \eqref{eq:etoile2} is an arithmetic progression to modulus
\([p_1,p_2,\dotsc,p_l]\), unless it is empty. Let
\(W_{\ell,\mathbf{P}}\) denote the set of all arithmetic progressions
arising in this way. Then the condition \(n\in Y_\ell\) is equivalent to
\(n\) not lying in any of the arithmetic progressions in
\(W_{\ell,\mathbf{P}}\). Likewise, for
\(\beta_1,\dotsc,\beta_{2 k}\in \mathbb{Z}\), the condition that
\(n+\beta_i\in Y_{\ell}\) for all \(1\leq i\leq 2 k\) is equivalent to
asking that \(n\) not be in any arithmetic progression of the form
\(P-\beta_i\) with \(P\in W_{\ell,\mathbf{P}}\) and \(1\leq i\leq 2 k\).

We are thus in the kind of situation to which our sieve for
composite moduli is applicable. Applying the Proposition above, we
obtain, for any \(m\geq 1\),
\[1_{n+\beta_i\in Y_{\ell} \forall 1\leq i\leq 2 k}=
\mathop{\sum_{R\in \mathscr{Q}^\cap}}_{\omega(\mathfrak{q}(R))\leq m} c_R 1_{n\in R} + O^*\left(3^{m+\ell} \mathop{\sum_{R\in \mathscr{Q}^\cap}}_{m<\omega(\mathfrak{q}(R))\leq m+\ell} 1_{n\in R} \right)
\] for \(\mathscr{Q} = W_{\ell,\mathbf{P}}(\beta)\) and some
\(c_R\in \mathbb{R}\) with \(|c_R|\leq 2^{\omega(\mathfrak{q}(R))}\).
(The proof is one line: we define \(\mathfrak{D}\) to be the set of all
non-empty \(R\in \mathscr{Q}^\cap\) such that the modulus of \(R\) has
\(\leq m\) prime factors.)

The more obvious issue now is how to bound the error term here. (For us, in
our application, there
is also the related issue of showing that the main term is well-behaved
-- in particular, its sum over certain arithmetic progressions should
not be too large.)

To keep track of the kind of conditions giving rise to a progression
\(R\in \mathscr{Q}^\cap\), we find it sensible to define a \emph{sieve
graph}, which is, one may say, a pictorial representation of those
conditions, or rather of what their general shape is and how they relate to
each other.

We define a \emph{sieve graph} to be a directed graph consisting of:
\begin{enumerate}
\item
a path of length \(2 k\), called the \emph{horizontal path};
\item
\emph{threads} of length \(< \ell\), of two kinds:
\begin{enumerate}
\item a \emph{closed
thread}, which is a cycle that contains some vertex of the horizontal
path, and is otherwise disjoint from it,
\item an \emph{open thread}, which
is a path that has an endpoint at some vertex of the horizontal path,
and is otherwise disjoint from it;
\end{enumerate}
\item for each open thread and each of
the two endpoints of that thread, an edge whose tail is that endpoint,
but whose head belongs only to the edge (i.e., it is a vertex of degree
\(1\)). These two edges will be called the thread's \emph{witnesses};
they are considered to be part of the thread.
\end{enumerate}

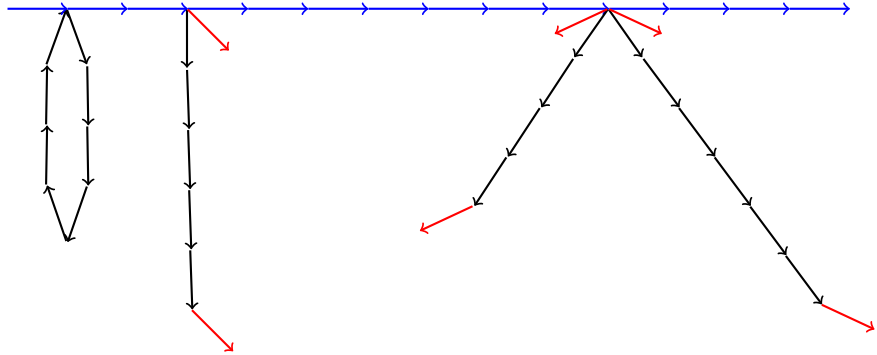
\begin{figure}
\begin{tikzpicture}[thick,scale=0.8,->,fill=black!50]
  \tikzstyle{vertex}=[circle,fill=black!50,minimum size=0pt, inner sep=0pt]
        
          \foreach \x in {0,1,...,14}
             \node[vertex] (G-\x) at (\x,0) {};

         \node[vertex] (C-1) at ([shift=(-70:1)]{G-1}) {};
         \draw (G-1) -> (C-1);
         \node[vertex] (C-2) at ([shift=(-90:1)]{C-1}) {};
         \node[vertex] (C-3) at ([shift=(-90:1)]{C-2}) {};
         \node[vertex] (C-4) at ([shift=(-110:1)]{C-3}) {};
         \node[vertex] (C-5) at ([shift=(110:1)]{C-4}) {};
         \node[vertex] (C-6) at ([shift=(90:1)]{C-5}) {};
         \node[vertex] (C-7) at ([shift=(90:1)]{C-6}) {};
         \draw (C-7) -> (G-1);
         \foreach \x in {1,...,6}
              {
                \tikzmath{\y=\x+1;}
                \draw (C-\x) -> (C-\y);
              };
              
         \foreach \x in {0,1,...,13}
              {
                \tikzmath{\y=\x+1;}
                \draw[blue] (G-\x) -> (G-\y);
              };

        \node[vertex] (O-1) at ([shift=(-90:1)]{G-3}) {};
        \draw (G-3) -> (O-1);
        \foreach \x in {2,...,5}
              {
                \tikzmath{\y=\x-1;}
                \node[vertex] (O-\x) at ([shift=(-90:1)]{O-\y}) {};
              }
        \foreach \x in {1,...,4}
              {
                \tikzmath{\y=\x+1;}
                \draw (O-\x) -> (O-\y);
              };
        \node[vertex] (W-1) at ([shift=(-45:1)]{G-3}) {};
        \node[vertex] (W-2) at ([shift=(-45:1)]{O-5}) {};
        \draw[red] (G-3) -> (W-1);
        \draw[red] (O-5) -> (W-2);

        \node[vertex] (OT-1) at ([shift=(-125:1)]{G-10}) {};
        \draw (G-10) -> (OT-1);
        \foreach \x in {2,...,4}
              {
                \tikzmath{\y=\x-1;}
                \node[vertex] (OT-\x) at ([shift=(-125:1)]{OT-\y}) {};
              }
        \foreach \x in {1,...,3}
              {
                \tikzmath{\y=\x+1;}
                \draw (OT-\x) -> (OT-\y);
              };
        \node[vertex] (WT-1) at ([shift=(-155:1)]{G-10}) {};
        \node[vertex] (WT-2) at ([shift=(-155:1)]{OT-4}) {};
        \draw[red] (G-10) -> (WT-1);
        \draw[red] (OT-4) -> (WT-2);

        \node[vertex] (OX-1) at ([shift=(-55:1)]{G-10}) {};
        \draw (G-10) -> (OX-1);
        \foreach \x in {2,...,6}
              {
                \tikzmath{\y=\x-1;}
                \node[vertex] (OX-\x) at ([shift=(-55:1)]{OX-\y}) {};
              }
        \foreach \x in {1,...,5}
              {
                \tikzmath{\y=\x+1;}
                \draw (OX-\x) -> (OX-\y);
              };
        \node[vertex] (WX-1) at ([shift=(-25:1)]{G-10}) {};
        \node[vertex] (WX-2) at ([shift=(-25:1)]{OX-6}) {};
        \draw[red] (G-10) -> (WX-1);
        \draw[red] (OX-6) -> (WX-2);
        
\end{tikzpicture}
\caption{A sieve graph: the blue path is the
horizontal path, and the witness edges of each open path are in red.
Any resemblance to a khipu is both intentional and ahistorical.}
\end{figure}

We will work with pairs \((G,\sim)\), where \(G\) is a sieve graph and
\(\sim\) is an equivalence relation on the edges of \(G\) such that the
witnesses of a thread are equivalent to each other. We can put
additional conditions, reflecting the conditions defining \(Y_{\ell}\):
we require that witnesses be equivalent to no other edges in their
thread, and that, in any thread, the set of edges in an equivalence
class form a connected subgraph (meaning: primes do not repeat in a thread unless
they are consecutive).

At any rate, it is clear what we will do: we will go over different
pairs \((G,\sim)\), and, for each pair, we will consider the
divisibility conditions resulting from assigning a distinct prime in
\(\mathbf{P}\) to each equivalence class of \(\sim\). We recall that
\(Y_{\ell}\) is defined as the set of integers that do not satisfy any
conditions of a certain kind. A pair \((G,\sim)\), together with an
assignment of a prime in \(\mathbf{P}\) to each equivalence class of
\(\sim\), corresponds to a conjunction
\(Q_1\wedge Q_2\wedge \dotsb \wedge Q_j\) of some such conditions
\(Q_i\). (To be precise - the conditions are given by a pair
\((G,\sim)\) together with a subset
\(\mathbf{l}\subset \{1,2,\dotsc,2k\}\), corresponding to the ``lit''
edges: only those edges in the horizontal path whose indices are in
\(\mathbf{l}\) impose divisibility conditions - the unlit edges are
muted, so to speak.) We need to study these conditions \(Q_i\) (all of
which are of the form ``\(n\) belongs to an arithmetic progression'', as
we have seen) because they will appear in the approximation to
\(1_{Y_{\ell}}\) that a sieve will give us.

We say \((G,\sim)\) is \emph{non-redundant} if every thread contains at
least one edge \(x\) (possibly a witness) whose equivalence class
\([x]\) contains no edge in any other thread. (A thread where every edge
is equivalent to an edge in some other thread would correspond to a
condition that either is redundant, given the conditions from the other
threads, or contradicts them. As Caliph Omar did \emph{not} say\ldots{})
The \emph{cost} \(\kappa(G,\sim)\) is the number of equivalence classes
that contain at least one edge (possibly a witness) in some thread,
i.e., the number of classes that do not contain only edges in the
horizontal path. Let \(\mathbf{W}_{k,\ell,m}\) be the set of
non-redundant pairs with given parameters \(k\) and \(\ell\) and cost
\(m\). It is clear that \(\mathbf{W}_{k,\ell,m}\) must be finite, since
any pair of cost \(m\) contains at most \(m\) threads. It is not hard to
bound the number of elements of \(\mathbf{W}_{k,\ell,m}\) with a given
number of threads \(r\leq m\).

When we apply our sieve for composite moduli so as to approximate
$1_{n+\beta_i\in Y_\ell \forall 1\leq i\leq 2k}$,
we define \(\mathfrak{D}\) to be the set of conditions
corresponding to non-redundant pairs \((G,\sim)\) with cost
\(\kappa(G,\sim)\leq m\) for some value of \(m\) we choose. Then the
outer boundary \(\partial_{\mathrm{out}} \mathfrak{D}\) corresponds to
non-redundant pairs \((G,\sim)\) of cost
\(m<\kappa(G,\sim)\leq m+\ell\). Our task is then to prove that the
contribution of all \((G,\sim)\) with cost between \(m\)
and \(m+\ell\) is small. The crucial part is to show that, given any
such \((G,\sim)\), together with a sign \(\sigma_y\) for each edge \(y\)
and a subset \(\mathbf{l}\subset \mathbf{k}\), the sum of
\[\prod_{i\in \mathbf{k}\setminus \mathbf{l}} \frac{1}{p_{[i]}}
\prod_{[x]\not\subset \mathbf{k}\setminus \mathbf{l}} \frac{1}{p_{[x]}}
\] over all choices of \(p_{[x]}\in \mathbf{P}\) (per equivalence class
\([x]\) of \(\sim\)) is small: it is at most
\[\mathscr{L}^{s-r} \left(\frac{\log H}{H_0}\right)^r,\] where \(r\) is
the number of threads in \(G\) and \(s\) is the number of equivalence
classes of \(\sim\). (Recall that \(\mathbf{P}\in [H_0,H]\) and
\(\mathscr{L} = \sum_{p\in \mathbf{P}} 1/p\).)

The proof is simple, and its main idea is as follows. Since \((G,\sim)\)
is non-redundant, each thread contains an edge in an equivalence class
that does not appear in other threads (or elsewhere in the same thread,
except for consecutive appearances). For each thread, we choose one such
edge \(x\). Then the thread binds the variable \(p_{[x]}\), so to speak,
and so we lose one degree of freedom for each \(r\). In detail:

\begin{enumerate}
\def\labelenumi{\arabic{enumi}.}
\tightlist
\item
  for a closed thread, the sum \(\sum_y \sigma_y p_{[y]}\) over the
  edges \(y\) of the thread is \(0\), and so \(p_{[x]}\) is determined
  by the other \(p_{[y]}\);
\item
  for an open thread and \(x\) a witness, \(p_{[x]}\) can range only
  over the prime divisors of \(\sum_y \sigma_y p_{[y]}\);
\item
  for an open thread and \(x\) not a witness, given \(p_{[y]}\) for the
  other \([y]\) in the thread, and given \(p_{[z]}\) for \(z\) the
  thread's witness, the class of \(p_{[x]}\) modulo \(p_{[z]}\) is
  determined.
\end{enumerate}

\begin{center}\rule{0.5\linewidth}{0.5000000000pt}\end{center}

We can give estimates on the main term of the sieve in much the same
way, only keeping track of the set \(\mathbf{W}_{k,\ell,m}'\) of
\emph{strongly non-redundant pairs}, meaning pairs \((G,\sim)\) such
that every thread contains at least one edge \(x\) (possibly a witness)
whose equivalence class \([x]\) contains no edge in any other thread and
no lit edge in the horizontal path (that is, no edge in the horizontal
path with index in \(\mathbf{l}\)).

\bibliographystyle{alpha}
\bibliography{trace}

\end{document}